\documentclass[11pt,final]{amsart}
\usepackage{amsmath,amssymb,amsthm,amsfonts,mathrsfs,amsopn}
\usepackage[all]{xy}
\usepackage{dsfont}
\usepackage{hyperref}
\usepackage{color}
\usepackage{esint}
\usepackage{mathtools}
\mathtoolsset{showonlyrefs}
\usepackage{slashed}

\usepackage{tikz-cd}

\usepackage{faktor}

\newcommand{\tnorm}[1]{%
  \left\vert\kern-0.3ex\left\vert\kern-0.3ex\left\vert #1 \right\vert\kern-0.3ex\right\vert\kern-0.3ex\right\vert
}

\usepackage[margin=0.9in]{geometry}

\usepackage[obeyDraft]{todonotes}

\newcommand{\p}{\partial}
\newcommand{\R}{\mathbb{R}}
\newcommand{\C}{\mathbb{C}}
\newcommand{\T}{\mathbb{T}}

\newcommand{\D}{\slashed{D}}

\newcommand{\pd}{\slashed{\partial}}
\newcommand{\dd}{\mathop{}\!\mathrm{d}}
\newcommand{\sph}{\mathbb{S}}

\newcommand{\eps}{\varepsilon}

\newcommand{\pD}{\mathbf{D}}

\newcommand{\Abracket}[1]{\left<#1\right>} 
\newcommand{\parenthesis}[1]{\left(#1\right)} 
\newcommand{\braces}[1]{\left\{#1\right\}} 

\newcommand{\bZ}{\mathbb{Z}}

\DeclareMathOperator{\dist}{dist}
\DeclareMathOperator{\Dom}{Dom}
\DeclareMathOperator{\dv}{\dd{vol}}

\DeclareMathOperator{\Eigen}{Eigen}

\DeclareMathOperator{\id}{Id}

\DeclareMathOperator{\Mtp}{Multi} 

\DeclareMathOperator{\proj}{Pr}

\DeclareMathOperator{\Span}{Span}

\DeclareMathOperator{\Spect}{Spect}

\DeclareMathOperator{\vol}{vol}

\DeclareMathOperator{\up}{up}
\DeclareMathOperator{\down}{down}

\newtheorem{thm}{Theorem}[section]

\newtheorem{lemma}[thm]{Lemma}
\newtheorem{prop}[thm]{Proposition}

\newtheorem{rmk}[thm]{Remark}

\title[Splitting Periodic Dirac waves]{Stationary periodic solutions for nonlinear Dirac equations with non-coercive nonlinearity II: splitting}

\subjclass[2020]{35A01, 35J46, 35J50, 35Q70, 81Q15}

\thanks{The authors are supported by Beijing Natural Science Foundation No. 1262018.}

\keywords{nonlinear Dirac equation, stationary periodic solutions, Soler-type nonlinearity, spinor splitting, product torus}

\author[R. Wu]{Ruijun Wu}
\address{Ruijun Wu, School of Mathematics and Statistics, Beijing Institute of Technology, Zhongguancun South Street No. 5, 100081 Beijing, P.R. China.}
\email{ruijun.wu@bit.edu.cn}

\author[F. Zhang]{Fuping Zhang}
\address{Fuping Zhang, School of Mathematics and Statistics, Beijing Institute of Technology, Zhongguancun South Street No. 5, 100081 Beijing, P.R. China.}
\email{fuping.zhang@bit.edu.cn}

\begin{document}

\begin{abstract}
    We study stationary periodic solutions of nonlinear Dirac equations with Soler-type nonlinearities. 
    By splitting off a circle factor the problem is reduced to dimension two.
    The nonlinearity degenerates along a Lorentz null cone, causing difficulties for the variational analysis. 
    Using a coercive perturbation we first obtain minimax perturbed solutions. 
    The key new ingredient is a quantitative separation of the lowest positive eigenspace of the reduced Dirac operator from the null cone; together with a uniform resolvent estimate, it yields uniform bounds that allow us to remove the perturbation.
    This produces nontrivial periodic solutions for temporal frequencies near the corresponding positive spectral threshold, including frequencies above the mass.

\end{abstract}

\maketitle

\section{Introduction}

Motivated by physical models for fermionic particles, we consider the stationary nonlinear Dirac equation
\begin{align}\label{eq:NDE-spatial}
    -\sum_{j=1}^3 i\gamma^0\gamma^j\p_j\psi + m\gamma^0\psi =a\psi + \p F
\end{align}
on a flat three-dimensional torus~$\T^3$, where~$m>0$ stands for the mass,~$a>0$ is a temporal frequency,~$F\colon \T^3\times \C^4\to \R$ is the nonlinearity standing for the self-coupling interaction and~$\p F(\psi(x))\equiv \p_\psi F(x,\psi(x))$ denotes the Nemytskii operator associated to~$F$.
The~$\gamma^\mu$'s are given as follows.
The matrix~$\gamma^0$ takes the form
\begin{align}
    \gamma^0= \begin{pmatrix} I_2 & 0 \\ 0 & -I_2\end{pmatrix}
    =\begin{pmatrix}
        1 & & &  \\ & 1 & & \\ & & -1 & \\ & &  & -1
    \end{pmatrix}
\end{align}
and induces a decomposition of the spinors into~$\pm1$ eigenspaces of~$\gamma^0$.
As for the others, we denote by
\begin{align}
    {\sigma^1} = \left( {\begin{array}{*{20}{c}}
			{0}&1 \\
			1&{0}
	\end{array}} \right),\;{\sigma^2} = \left( {\begin{array}{*{20}{c}}
			{0}&{ - i} \\
			i&{0}
	\end{array}} \right),\;{\sigma^3} = \left( {\begin{array}{*{20}{c}}
			1&{0} \\
			{0}&{ - 1}
	\end{array}} \right)
\end{align}
for the Pauli matrices, and the gamma matrices are
\begin{align}
    {\gamma^k} = \left( {\begin{array}{*{20}{c}}
			{0}&{{\sigma^k}} \\
			{{-\sigma^k}}&{0}
	\end{array}} \right),\quad\text{for}\quad k= 1, 2, 3.
\end{align}
Note that they satisfy the following Clifford relations: for~$j,k\in\braces{1,2,3}$,
\begin{align}
    \sigma^j\sigma^k+\sigma^k\sigma^j= 2\delta^{jk}I_2, & &
    \gamma^j\gamma^k + \gamma^k\gamma^j= -2\delta^{jk}I_4.
\end{align}
Moreover,
\begin{align}
    \gamma^0\gamma^k + \gamma^k \gamma^0 =0, & &
    (\gamma^0)^2=I_4.
\end{align}
Then~$\sum_{\mu} i\gamma^\mu \p_\mu$ is the four-dimensional space-time Dirac operator.

Equations of such forms have been widely studied in physics, chemistry and mathematics, see~\cite{Esteban2002overview,Ranada1983Classical,Thaller1992Dirac} and the references therein. 
They are governed by a general simple Dirac equation in the Minkowski spacetime 
\begin{align}\label{eq:NDE-spacetime}
    \sum_{\mu=0}^3 i\gamma^\mu\p_\mu \Psi - m \Psi +\gamma^0 \p_\Psi F(\cdot,\Psi)=0, \qquad \mbox{ in } \; \R^{1+3}. 
\end{align}
Here~$x^0$ stands for the time coordinate, and the equation is hyperbolic. 
The equation can be reduced to~\eqref{eq:NDE-spatial} if we consider stationary solutions, namely spinors of the form
\begin{align}\label{eq:ansatz}
    \Psi(x^0, \vec{x}) = e^{-iax^0}\psi(\vec{x})
\end{align}
for some~$a\in \R\setminus\braces{0}$. 
Note that~$|\Psi|^2=|\psi|^2$, so~$\Psi$ and~$\psi$ possess the same probability density, and as time evolves the state is essentially unchanged in shape. 
Additional hypotheses on~$F$ are required before one can really treat~\eqref{eq:NDE-spacetime} as an equation on~$\R^3$, for example~$F$ should not depend on the time coordinate explicitly, and~$F$ is invariant under phase rotation:
\begin{align}
    F(x, e^{i\theta}\psi)=F(x,\psi), \quad \forall e^{i\theta} \in U(1). 
\end{align}
 
Now we restrict to consider~\eqref{eq:NDE-spatial}.
A large part of the variational literature deals with nonlinearities of Ambrosetti--Rabinowitz type, namely, the nonlinearities consist of functions in the Euclidean amplitude~$|\psi|$ and small perturbations. 
In that setting the main difficulties arise from the strong indefiniteness of the functional, failure of Palais--Smale conditions, and the competition with the nonlinear term, see for example~\cite{BartschDing2006Solutions,DingRuf2008Solutions,YangDing2012Stationary}.
On general compact spin manifolds, related variational problems have also been studied, see for instance~\cite{Ammann2003Habil, BartschXu2021spinorial, DingLiXu2016bifurcation, DingXu2025localized, Isobe2011nonlinear, Isobe2020Morse}.

Another class of nonlinearities consists of those involving Lorentz type scalars such as~$\bar{\psi}\psi\equiv \Abracket{\gamma^0\psi,\psi}$. 
They arise naturally in standard fermionic models.
In~\cite{Soler1970Classical} and the even earlier~\cite{Finkelstein1951Non-linea}, the authors considered the nonlinearity~$ (\bar{\psi}\psi)^2$ in the fermionic spinor functional. 
For this reason we refer to the nonlinearities of the form~$|\bar{\psi}\psi|^\nu$ as Soler-type nonlinearities; sometimes we also allow for a positive smooth function multiplied to this nonlinearity, standing for certain nonconstant field, as well as small perturbations added, see the examples in~\cite{WZ2026Stationary}. 

Writing the spinor as~$\psi=(\psi_{\up},\psi_{\down})$ as in~\eqref{eq:up-down}, we see clearly the degeneracy of this Lorentz scalar:
\begin{align}
    \bar{\psi}\psi=\Abracket{\gamma^0\psi,\psi}
    =|\psi_{\up}|^2 - |\psi_{\down}|^2.
\end{align}
Thus~$\bar{\psi}\psi$ may stay small even if~$|\psi|$ is large. 
Such a degeneracy is inherited by the nonlinearity~$F$: 
in the Lorentz null cone
\begin{align}
    \mathcal{N}=\braces{\psi\in\C^4 \mid \bar{\psi}\psi=0}, 
\end{align}
the nonlinearity~$F$ may be small, but this cone contains spinors of arbitrary large norm. 
This causes severe difficulties for the existence theory of~\eqref{eq:NDE-spatial}.
The autonomous case in~$\R^3$ has been studied extensively.
Via a radial ansatz, one can reduce to a system of ODEs in two functions, which can be solved by the shooting method as well as techniques from dynamical systems~\cite{BalabaneCazenave1988Existence,Cazenave1986Existence, EstebanSere1995stationary,Merle1988Existence}.

For symmetry reasons it is also natural to consider periodic solutions. 
In~\cite{DingLiu2014periodic} the authors dealt with nonlinearities of Ambrosetti--Rabinowitz type. 
Remark that there are also works studying~\eqref{eq:NDE-spatial} in~$\R^3$ which is non-autonomous but with~$F$ being periodic in the~$x$ variables~\cite{BartschDing2006Solutions,DingLiu2015Periodic,DingLiu2017Periodic}. 
For nonlinearities of Soler type, we recently obtained existence results for~$F$ satisfying additional structural constraints in~\cite{WZ2026Stationary}. 
The purpose of the present paper is to enlarge the frequency regime which admits nonzero solutions.

The main idea is to exploit the product decomposition~$\T^3=\T^2\times\sph^1$.
For a fixed circle mode~$\zeta$, the splitting ansatz
\begin{align}
    \psi=\varphi\otimes e^{i\zeta x^3}
\end{align}
reduces the equation to a nonlinear Dirac equation on~$\T^2$ with the lowest positive eigenvalue of the reduced Dirac operator being~$\sqrt{\zeta^2+m^2}$.
Although the corresponding eigenspace is no longer contained in the~$(+1)$-eigenspace of~$\gamma^0$, it still intersects the Lorentz null cone only at the origin. 
This leads to a quantitative null-cone separation principle. 
Together with a resolvent estimate on the orthogonal complement, this rules out blow-up of the perturbed minimax solutions whose levels tend to zero and whose normalized profiles approach the null cone. 
On the variational side, the perturbed functionals admit linking levels with upper bound~$C^*(a)$ such that 
\begin{align}
    C^*(a)\to 0 \qquad\mbox{as }a\to \sqrt{\zeta^2+m^2}. 
\end{align}
These two ingredients allow us to obtain estimates uniform in both the
coercive perturbation and~$a$ for~$a$ sufficiently close to~$\sqrt{\zeta^2+m^2}$
from below. 
Then we can pass to the zero-perturbation limit and obtain a nonzero solution. 

\ 

We need structure conditions on~$F\colon \T^3\times \C^4\to \R_+$ similar to those in~\cite{WZ2026Stationary} and in~\cite{EstebanSere1995stationary}, which are listed as follows. 
First of all, as already commented, we assume that~$F$ is invariant under phase rotation and does not depend on the third coordinate~$x^3$ in~$\T^3$.
Hence we can view~$F$ as defined on~$\T^2\times\C^4$. 
In addition, the following structural assumptions are imposed: there exist constants~$A_j>0$,~$1\leq j\leq 5$, and~$\nu>1$,~$2<\alpha_1\leq\alpha_2<4$,~$\alpha\in(0,1)$,~$\beta>2$ such that 
\begin{itemize}
    \item[(F1)] $ 0 \leq F(x,\varphi) \leq A_1 \big( |\varphi|^{\alpha_1} + |\varphi|^{\alpha_2} \big)$, $\forall x\in\T^2$ and $\forall \varphi \in \mathbb{C}^4 $;

    \item[(F2)] \( F \in C^{1,\alpha}_{loc} \), $F(x,0) =0 $, $\p_\psi F(x,0)=0$, and
          \( |\p_{\varphi} F(x,\varphi)| \leq A_2 |\varphi|^{\alpha_2 - 1} \) for \( |\varphi| \) large;

    \item[(F3)] \(  \p_\varphi F(x,\varphi)[\varphi]\geq \alpha_1 F(x,\varphi) \);

    \item[(F4)] \( F(x,\varphi) \geq A_3 |\bar{\varphi}\varphi|^\nu - A_4 \);

    \item[(F5)] for any~$\delta > 0$, there is a~$C_\delta > 0$, such that $\forall \varphi \in \mathbb{C}^4$,~$\forall x\in\mathbb{T}^2$,
        \begin{align}
        |\p_\varphi F(x,\varphi)| 
        \leq A_5 \left( \delta + C_\delta F(x,\varphi)^{\frac{1}{\beta}} \right) |\varphi|.
        \end{align}
\end{itemize}

\begin{rmk}
    Some more comments are in order. 
    \begin{itemize}
        \item[(i)]  By (F1) and (F4) we necessarily have~$\nu\leq \frac{\alpha_2}{2}<2$.

        \item[(ii)] The growth condition in (F2) holds for~$|\varphi|$ large. 
                    But together with (F1) we see that
                    \begin{align}\label{eq:F2-all}
                        |\p F(x,\varphi)|\leq C(1+|\varphi|^{\alpha_2-1}), \qquad \forall (x,\varphi)\in\T^2\times\C^4,  
                    \end{align}
                    for some constant~$C$. 
                    For later convenience we will assume~$A_2$ is large enough that~\eqref{eq:F2-all} holds with~$A_2$. 

        \item[(iii)] Examples of such nonlinearities satisfying (F1-5) include 
                    \begin{align}
                        F(x,\varphi)=h_1(x)|\bar{\varphi}\varphi|^{\frac{\alpha_1}{2}} + h_2(x)|\bar{\varphi}\varphi|^{\frac{\alpha_2}{2}}
                    \end{align}
                    where~$h_1,h_2\colon \T^2 \to \R_+$ are smooth uniformly bounded positive functions, and~$2<\alpha_1 \leq \alpha_2 <4$. 
                    Then we can take a suitable~$\beta\in (2,\frac{\alpha_2}{\alpha_2-2})$. 
                    For more examples we refer to~\cite{WZ2026Stationary}. 
    \end{itemize}
\end{rmk}

We take a lattice~$\Gamma(\ell_1,\ell_2,\ell_3)$ of rank three, and consider the tori 
\begin{align}
    \T^3 =\faktor{\R^3}{\Gamma(\ell_1,\ell_2,\ell_3)}, &  & 
    \T^2 =\faktor{\R^2}{\Gamma(\ell_1,\ell_2)}. 
\end{align}
We refer to Section~\ref{sect:splitting} for explanation of notations. 
\begin{thm}\label{thm:existence-splitting}
    Let~$\zeta\in\bZ\frac{2\pi}{\ell_3}$, ~$\theta=1-\frac{m^2}{36(\zeta^2+m^2)}$, and assume~$F\colon \T^2\times \C^4\to\R_+$ is phase-rotation invariant and satisfies (F1-5). 
    
    There exists~$m_*\in (\theta \sqrt{\zeta^2+m^2}, \sqrt{\zeta^2+m^2})$ such that for each~$a\in(m_*,\sqrt{\zeta^2+m^2})$, the equation~\eqref{eq:NDE-spatial} admits a nonzero~$C^1$ solution~$\psi$ on~$\T^3=\faktor{\R^3}{\Gamma(\ell_1,\ell_2,\ell_3)}$ of the form~$\psi=\varphi\otimes e^{i\zeta x^3}$.
\end{thm}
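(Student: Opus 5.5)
We plan to follow the strategy outlined in the introduction, in four steps.

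\emph{Step 1: reduction to $\T^2$.} Insert the ansatz $\psi=\varphi\otimes e^{i\zeta x^3}$ into \eqref{eq:NDE-spatial}. Since $F$ is independent of $x^3$ and phase-rotation invariant, $\p F(\psi)=\p F(\varphi)\,e^{i\zeta x^3}$. The equation therefore becomes
\begin{align}
    D_\zeta\varphi := -i\gamma^0\gamma^1\p_1\varphi - i\gamma^0\gamma^2\p_2\varphi + \zeta\gamma^0\gamma^3\varphi + m\gamma^0\varphi = a\varphi + \p F(\varphi)
\end{align}
on $\T^2$. The operator $D_\zeta$ is self-adjoint, with $D_\zeta^2=-\Delta+\zeta^2+m^2$ by the Clifford relations. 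Its spectrum is $\pm\sqrt{|k|^2+\zeta^2+m^2}$, with $k$ ranging over the dual lattice, so the lowest positive eigenvalue is $\lambda_1=\sqrt{\zeta^2+m^2}$. Its eigenspace $E_1$ consists of constant spinors $\varphi$ with $(\zeta\gamma^0\gamma^3+m\gamma^0)\varphi=\lambda_1\varphi$.

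\emph{Step 2: null-cone separation.} For $\varphi\in E_1$ we have $\langle\gamma^0\varphi,\varphi\rangle=\frac{m}{\lambda_1}|\varphi|^2$. This holds because $\gamma^0\gamma^3$ anticommutes with $\gamma^0$, so its cross term vanishes in the expectation of $\gamma^0$ on a $\lambda_1$-eigenvector; more precisely, $\langle\gamma^0\varphi,\varphi\rangle=\lambda_1^{-1}\langle\gamma^0 M\varphi,\varphi\rangle$ with $\gamma^0M=\zeta\gamma^3+m$, and $\gamma^3$ is skew-Hermitian. Hence $E_1\cap\mathcal N=\{0\}$ quantitatively. By continuity, if $\varphi=e+w$ with $e\in E_1$ and $\|w\|\le\epsilon\|e\|$ for $\epsilon$ comparable to $m/(6\lambda_1)$, then $\bar\varphi\varphi\ge \frac{m}{2\lambda_1}|e|^2$ pointwise up to errors controlled by $w$. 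This is where the constant $\theta=1-\frac{m^2}{36\lambda_1^2}$ should enter: the gap $\lambda_1-a<(1-\theta)\lambda_1$ must be small relative to this angle.

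\emph{Step 3: perturbed minimax.} On $H^{1/2}(\T^2,\C^4)=E^-\oplus E^0\oplus E^+$ for $D_\zeta-a$, consider the perturbed functionals
\begin{align}
    \Phi_\eps(\varphi)=\tfrac12\langle (D_\zeta-a)\varphi,\varphi\rangle-\int F(\varphi)-\frac{\eps}{q}\int|\varphi|^q,
\end{align}
with $q\in(\alpha_2,4)$. The extra term is coercive, so Palais--Smale holds via (F3) in the Ambrosetti--Rabinowitz manner, and linking over $E^-\oplus\R e_1$ with $e_1\in E_1$ gives critical points $\varphi_\eps$. The linking level satisfies $0<c_\eps\le C^*(a)$, where $C^*(a)$ is independent of $\eps$. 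This bound comes from evaluating $\Phi_\eps\le\Phi_0$ on $E^-\oplus\R_+ e_1$: there (F4) gives growth of order $|\bar\varphi\varphi|^\nu$, and Step 2 shows this dominates the quadratic part $(\lambda_1-a)|t|^2$, so $C^*(a)\lesssim(\lambda_1-a)^{\nu/(\nu-1)}\to0$. The lower bound comes from (F1) and smallness on a sphere in $E^+$.

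\emph{Step 4: uniform bounds (main obstacle).} Using (F3), the critical point identity yields $\int F(\varphi_\eps)+\eps\int|\varphi_\eps|^q\lesssim c_\eps\le C^*(a)$. Assume for contradiction that $\|\varphi_\eps\|\to\infty$, and write $\varphi_\eps=e_\eps+w_\eps$ with $e_\eps\in E_1$ and $w_\eps\in E_1^\perp$. On $E_1^\perp$, $D_\zeta-a$ is invertible with a resolvent bound independent of $a$ near $\lambda_1$, since the spectral gap to the next eigenvalues is fixed. Applying it to the equation $(D_\zeta-a)w_\eps=P^\perp(\p F(\varphi_\eps)+\eps|\varphi_\eps|^{q-2}\varphi_\eps)$, and estimating the right side by (F5) with small $\delta$ (and the $\eps$-term by interpolation), gives
\begin{align}
    \|w_\eps\|\le C\big(\delta+C_\delta\,C^*(a)^{1/\beta}\big)\|\varphi_\eps\|.
\end{align}
For $a$ close to $\lambda_1$, $w_\eps$ is thus a small fraction of $e_\eps$. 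Step 2 then gives $\bar\varphi_\eps\varphi_\eps\gtrsim|e_\eps|^2$ on a large set, so (F4) forces $\int F(\varphi_\eps)\to\infty$, contradicting the level bound. The delicate point is controlling the $\eps$-term and making the constants close up; this fixes $m_*$.

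With uniform $H^{1/2}$ bounds in hand, elliptic bootstrapping via (F2) gives $C^{1}$ bounds, and we pass $\eps\to0$. Nontriviality follows from the lower bound $c_\eps\ge c_0(a)>0$ uniform in $\eps$, which persists in the limit by strong convergence on the compact torus.
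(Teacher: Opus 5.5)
Your proposal is correct and follows essentially the same route as the paper. The shared steps are the splitting reduction, the constant lowest eigenspace with $\bar e e=\frac{m}{\sqrt{\zeta^2+m^2}}|e|^2$, and coercively perturbed linking over $E^-\oplus\R_+e_1$ with an $\eps$-independent level bound $C^*(a)\to0$. They continue with the (F3) bound on $\int F$, the $a$-uniform resolvent estimate on $E_1^\perp$ combined with (F5) for small $\delta$, and (F4) to close the contradiction. The only variation is that you use the constancy of $E_1$ to make the null-cone separation explicit, instead of the paper's compactness argument (Lemma~\ref{lemma:null-cone-separation}).

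One correction: your claimed rate $C^*(a)\lesssim(\sqrt{\zeta^2+m^2}-a)^{\nu/(\nu-1)}$ is false in general. Because of the $-A_4$ in (F4), $F$ may vanish near $0$, so the level bound is in general only $O(\sqrt{\zeta^2+m^2}-a)$, as in the paper's $C^*(a)=\frac12\bigl(1-\frac{a}{\sqrt{\zeta^2+m^2}}\bigr)R^2$. Only $C^*(a)\to0$ is used, so the argument is unaffected.
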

Note that for~$\zeta\neq 0$, the above solution is automatically nonconstant, at least in~$x^3$. 
Meanwhile, since~$\sqrt{\zeta^2+m^2}>m$, we get existence with~$a$ above the mass threshold~$m$, and as~$\zeta$ varies in~$\bZ\frac{2\pi}{\ell_3}$, we get larger frequency range than that in~\cite{WZ2026Stationary}. 

Our strategy is inspired by the radial ansatz reduction and more closely by~\cite{SireXu2023variational} where the authors directly use the circle factor to reduce the spinorial Yamabe equation to a codimension-one submanifold, turning the critical nonlinearity to a subcritical one. 
For technical reasons we can only deal with subquartic nonlinearities, but this covers the sub-cubic case in~\cite{WZ2026Stationary}. 

\ 

The paper is organized as follows. 
In Section~\ref{sect:splitting} we split out a circle factor and relate the spinors to those on the two-dimensional torus~$\T^2$.
For each~$\zeta$ in the spectrum of the circle that is split out, we reduce~\eqref{eq:NDE-spatial} to~\eqref{eq:NDE-T2} on~$\T^2$. 
The two-dimensional domain allows for nonlinearities of subquartic growth. 
In preparation for later variational analysis, we clarify the eigenvalues and eigenspaces of the Dirac type operators on~$\T^2$ in Section~\ref{sect:spectral}, and make two quantitative estimates concerning spinors away from the lowest positive eigenspace in Section~\ref{sect:key lemma}. 
These generalize the corresponding conclusions in~\cite{WZ2026Stationary}. 
We realize solutions of the reduced equation~\eqref{eq:NDE-T2} as critical points of a functional~$J_\zeta\colon H^{\frac{1}{2}}(\T^2,\C^4)\to\R$ and use a perturbation strategy which dates back to~\cite{EstebanSere1995stationary} and is well developed in~\cite{WZ2026Stationary}.
When the frequency~$a$ is in a left neighborhood of the minimal positive eigenvalue~$\sqrt{m^2+\zeta^2}$ of the reduced Dirac operator, the perturbed functional admits a local level-linking uniformly in~$\eps$.
Using the admissible deformation theory developed in~\cite{WZ2026Stationary}, the local linking geometry leads to a positive minimax level~$\Lambda_1(\zeta,\eps)$ for each~$\eps\in (0,1]$, which can be verified to be a critical level of~$J_{\zeta,\eps}$.
Moreover, we can estimate these critical levels in a fine way. 
At the end of Section~\ref{sect:perturbed solutions} we get a uniform~$H^1$ estimate of such perturbed minimax solutions under the condition that~$a$ is in a sufficiently small left neighborhood of~$\sqrt{m^2+\zeta^2}$. 
Passing to a convergent subsequence, we show that the perturbations in both the equation and the functional vanish in the limit~$\eps_n\to 0^+$. 
The limit is the desired nonzero solution in the main Theorem~\ref{thm:existence-splitting}. 
The regularity can also be verified since the zero order terms are H\"older continuous and the Dirac operator is elliptic of first order.

\section{Splitting of the torus and reduction of the equation}\label{sect:splitting}

In this section we clarify the relations between the spinors on~$\T^3$ and~$\T^2=\faktor{\T^3}{\sph^1}$, as well as the Dirac operators on these manifolds. 

Let~$\Gamma(\ell_1,\ell_2,\ell_3)=\ell_1\mathbb{Z} \times \ell_2\mathbb{Z} \times \ell_3\mathbb{Z}$ be a product lattice of rank three in~$\R^3$, whose dual lattice is 
\begin{align}
    \Gamma(\ell_1,\ell_2,\ell_3)^*=\frac{1}{\ell_1}\bZ\times \frac{1}{\ell_2}\bZ \times \frac{1}{\ell_3}\bZ.
\end{align}
The periodic functions and spinors can be viewed as defined on the torus
\begin{align}
    \T^3\coloneqq \faktor{\R^3}{\Gamma(\ell_1,\ell_2,\ell_3)} 
    = \sph^1\parenthesis{\frac{\ell_1}{2\pi}} 
    \times \sph^1\parenthesis{\frac{\ell_2}{2\pi}}
    \times \sph^1\parenthesis{\frac{\ell_3}{2\pi}}.
\end{align}
Since this torus admits circle actions (in more than one way), we can split out a circle factor and get 
\begin{align}\label{eq:product-str-mfld}
    \T^3= \T^2\times \sph^1(\frac{\ell_3}{2\pi})
\end{align}
where~$\T^2=\faktor{\R^2}{\Gamma(\ell_1,\ell_2)}$ and~$\Gamma(\ell_1,\ell_2)=\ell_1\bZ\times \ell_2\bZ\subset \R^2$ is a rank two lattice in~$\R^2$.
With a little abuse of notation we will write~$\T^3=\T^2\times\sph^1$. 

In this work we work with spinors which are~$\C^4$ valued functions on~$\T^3$.
This amounts to considering the trivial spin structure on~$\T^3$ and a double of the trivial spinor bundle~$\Sigma \T^3 \oplus\Sigma \T^3=(\T^3\times \C^2)\oplus (\T^3\times\C^2)=\T^3\times \C^4$.
Note that with respect to the product structure in~\eqref{eq:product-str-mfld}, we have
\begin{align}\label{eq:splitting-spinor-bundles}
    \pi_1^*\Sigma\T^2\otimes\pi_2^*\Sigma\sph^1\cong\Sigma\T^3=\T^3\times\C^2,
\end{align}
where
\begin{itemize}
    \item $\pi_1\colon\T^3\to\T^2$ and~$\pi_2\colon\T^3\to\sph^1$ are the projections,
    \item $\Sigma\T^2=\T^2\times \C^2$ is the trivial spinor bundle associated to the trivial spin structure of~$\T^2$,
    \item and~$\Sigma\sph^1=\sph^1\times\C$ is also the trivial spinor bundle over~$\sph^1$.
\end{itemize}
We refer to~\cite{Ginoux2009Dirac} and~\cite{Lawson1989Spin} for more information on spin geometry.

\begin{rmk}
    There are two spin structures on the circle.
    Take the circle~$\sph^1(\frac{\ell}{2\pi})$ of perimeter~$\ell$ for example.
    The two spin structures are:
    \begin{itemize}
        \item the trivial spin structure~$\xi_0$, with spinors represented by~$\ell$-periodic complex-valued functions;
        \item the twisted spin structure~$\xi_1$, with spinors represented by~$\ell$-anti-periodic complex-valued functions. 
    \end{itemize}
    Let~$\theta\in [0,\ell]$ denote the angular coordinate for~$\sph^1(\frac{\ell}{2\pi})$, then the Dirac operator, in both cases, takes the form 
    \begin{align}
        \pd_{\sph^1}=-i\frac{\dd}{\dd\theta}.
    \end{align}
    However, the spectra for the Dirac operators with respect to different spin structures are different. 
    In the trivial case,
    \begin{align}
    \Spect(-i\frac{\dd}{\dd \theta};\xi_0)= \bZ \frac{2\pi}{\ell}, 
    \end{align}
    and for each~$\zeta\in\mathbb{Z}\frac{2\pi}{\ell}$, the eigenspace is generated by~$e^{i\zeta\theta}$. 
    Meanwhile for the twisted case, 
    \begin{align}
    \Spect(-i\frac{\dd}{\dd\theta};\xi_1)=(\frac{1}{2}+\mathbb{Z})\frac{2\pi}{\ell}
    \end{align}
    and for each~$\zeta\in(\frac{1}{2}+\mathbb{Z})\frac{2\pi}{\ell}$, the eigenspace for~$\zeta$ is again generated by~$e^{i\zeta\theta}$. 
    We refer to the note~\cite{Varilly2006Dirac} for further information. 
    Note that in either case, for any~$\zeta\in\Spect(-i\frac{\dd}{\dd \theta})$, the corresponding eigenspace is generated by~$e^{i\zeta\theta}$, and the eigenspinors have \emph{constant length}. 
\end{rmk}

We now clarify the spinors in this setting. 
The spinors on~$\R^{1+3}$ are~$\C^4$-valued functions. 
By the stationary ansatz, we are essentially dealing with spinors on~$\R^3$, via the relation 
\begin{align}
    \Sigma\R^{1+3}|_{\braces{t}\times \R^3}\cong \Sigma\R^3 \oplus \Sigma \R^3. 
\end{align}
Accordingly, we consider spinors in the bundle~$\Sigma\T^3\oplus\Sigma\T^3$, which corresponds to the up-down formulation of spinors as in~\cite{WZ2026Stationary}. 
Indeed, by writing
\begin{align}\label{eq:up-down}
    \psi= \begin{pmatrix}\psi^1\\ \psi^2 \\ \psi^3 \\ \psi^4\end{pmatrix}
    \equiv \begin{pmatrix} \psi_{\up} \\ \psi_{\down} \end{pmatrix}, 
\end{align}
the components~$\psi_{\up},\psi_{\down}$ are spinors on~$\Sigma \T^3$.
By~\eqref{eq:splitting-spinor-bundles}, we can further regard~$\psi_{\up},\psi_{\down}$ as a section of~$\pi_1^*\Sigma\T^2\otimes\pi_2^*\Sigma\sph^1$.
This motivates the following consideration: take~$\psi$ of the form
\begin{align}\label{eq:spinor-splitting}
    \psi= \begin{pmatrix}
        \psi_{\up} \\ \psi_{\down}
    \end{pmatrix}
    = 
    \begin{pmatrix}
        \varphi_{\up}\otimes \phi_{\up} \\ 
        \varphi_{\down}\otimes \phi_{\down}
    \end{pmatrix}
\end{align}
where~$\varphi_{\up},\varphi_{\down}$ are sections of~$\Sigma\T^2$ and~$\phi_{\up},\phi_{\down}$ are sections of~$\Sigma\sph^1$.
We hope to find nice candidates of~$\phi_{\up},\phi_{\down}$ such that spinors in the form of~\eqref{eq:spinor-splitting} can solve~\eqref{eq:NDE-spatial}.

\ 

As in~\cite{WZ2026Stationary} we write the massless and massive Dirac operators as 
\begin{align}
    \D=\sum_{k=1}^3 -i\gamma^0\gamma^k \p_k, & & \mbox{ respectively } & & 
    \pD=\sum_{k=1}^3 -i\gamma^0\gamma^k\p_k+m\gamma^0 =\D+m\gamma^0, 
\end{align}
and the intrinsic Dirac operator on~$\mathbb{T}^3$ equipped with the trivial spin structure is denoted as 
\begin{align}
    \pd_{\T^3}=\sum_{k=1}^3 -i\sigma^k \p_k = -i\sigma\cdot \nabla. 
\end{align}
Then the equation~\eqref{eq:NDE-spatial} acting on~$\psi$ of the form~\eqref{eq:spinor-splitting} is equivalent to 
\begin{align}
    \pd_{\T^3}\psi_{\down} & + m\psi_{\up} - a \psi_{\up} - (\p F)_{\up}=0,  \\
    \pd_{\T^3}\psi_{\up} & -m\psi_{\down} - a \psi_{\down} - (\p F)_{\down }=0. 
\end{align}
The intrinsic Dirac operator acts on tensor product spinors in the following manner: 
\begin{align}
    \pd_{\T^3} \psi_{\up} 
    =& \sum_{k=1}^3  -i\sigma^k\p_k (\varphi_{\up}\otimes \phi_{\up}) \\
    =& \sum_{k=1}^2 -i\sigma^k(\p_k\varphi_{\up})\otimes \phi_{\up}  + (-i\sigma^3)\varphi_{\up}\otimes \p_3\phi_{\up} \\
    =& \parenthesis{\sum_{k=1}^2 -i\sigma^k\p_k\varphi_{\up} }\otimes \phi_{\up}
       + \sigma^3\varphi_{\up}\otimes \parenthesis{-i\frac{\dd}{\dd x^3}\phi_{\up}} \\
    =& (\pd_{\T^2}\varphi_{\up})\otimes\phi_{\up}
        +\sigma^3\varphi_{\up}\otimes (\pd_{\sph^1}\phi_{\up})
\end{align}
and similarly 
\begin{align}
    \pd_{\T^3}\psi_{\down}
    = \pd_{\T^3} (\varphi_{\down}\otimes \phi_{\down}) 
    = (\pd_{\T^2} \varphi_{\down})\otimes \phi_{\down}
     + \sigma^3\varphi_{\down}\otimes (\pd_{\sph^1}\phi_{\down}). 
\end{align}
\begin{rmk}
    In geometric literature one finds the following convention:
    \begin{align}
        \pd_{\T^3} \psi_{\up} 
    = \pd_{\T^3} (\varphi_{\up}\otimes \phi_{\up}) 
    = \pd_{\T^2} \varphi_{\up}\otimes \phi_{\up}
     + \omega_{\T^2}^{\C}\cdot\varphi_{\up}\otimes \pd_{\sph^1}\phi_{\up},
    \end{align}
    where~$\omega_{\T^2}^{\C}$ stands for the complex volume element of~$\T^2$:
    \begin{align}
    \omega_{\T^2}^{\C}=i(-i\sigma^1)(-i\sigma^2).
    \end{align}
    By calculating explicitly using our choice of the Pauli matrices, we find that 
    \begin{align}
        \omega_{\T^2}^{\C}= \begin{pmatrix} 1 & 0 \\ 0 & -1 \end{pmatrix} = \sigma^3.
    \end{align}
    Thus the two formulas are consistent. 
\end{rmk}
Consequently, we can reformulate the equation~\eqref{eq:NDE-spatial} as a system for~$\varphi_{\up},\varphi_{\down}$ provided the other factors~$\phi_{\up},\phi_{\down}$ can be chosen appropriately. 
On the other hand, for the Lorentz scalar~$\bar{\psi}\psi$, which now takes the form 
\begin{align}
    \bar{\psi}\psi=\Abracket{\gamma^0\psi,\psi}
    =|\psi_{\up}|^2 - |\psi_{\down}|^2
    =|\varphi_{\up}|^2 |\phi_{\up}|^2
     - |\varphi_{\down}|^2 |\phi_{\down}|^2.
\end{align}
This suggests choosing both circle factors to have constant lengths. 

Here is the observation of splitting method: if we take~$\phi_{\up}=\phi_{\down}=e^{i \zeta x^3}$ with~$\zeta\in\mathbb{Z}\frac{2\pi}{\ell_3}$ which satisfies 
\begin{align}
    \pd_{\sph^1} e^{i\zeta x^3}= \zeta e^{i\zeta x^3},
\end{align}
then the spinor~$\psi$ of the form~\eqref{eq:spinor-splitting} satisfies 
\begin{align}
    \bar{\psi}\psi=\bar{\varphi}\varphi, 
    \qquad \mbox{ with } 
    \qquad 
    \varphi 
    = \begin{pmatrix}
    \varphi_{\up} \\ \varphi_{\down}
    \end{pmatrix}, 
    \quad \mbox{ and } \quad 
    \psi=\varphi\otimes e^{i\zeta x^3
    }.
\end{align}
If the nonlinearity~$F$ is also independent of the~$x^3$ variable, then 
\begin{align}
    \p F(\psi) 
    =& \p_\psi F((x^1,x^2), \psi(x)) 
    = \p_\psi F((x^1,x^2),\varphi(x^1,x^2) e^{i\zeta x^3} )\\
    =& e^{i\zeta x^3}\p_\psi F(x^1,x^2,\varphi(x^1,x^2))
    = e^{i\zeta x^3}\p F(\varphi)
\end{align}
since we have assumed that~$F$ is invariant under phase rotation from the very beginning. 
In this case the new spinor~$\varphi\in \Gamma(\Sigma \T^2\oplus \Sigma \T^2)$ satisfies the equation 
\begin{align}
    \begin{pmatrix}
        0 & \pd_{\T^2} + \zeta \omega_{\T^2}^{\C} \\ \pd_{\T^2}+ \zeta \omega_{\T^2}^{\C} & 0
    \end{pmatrix}\varphi 
    + m\gamma^0\varphi - a\varphi -\p F(\varphi)=0, 
\end{align}

Thus we can reduce~\eqref{eq:NDE-spatial} to a system in~$\varphi\in\Gamma(\Sigma\T^2\oplus \Sigma\T^2)$. 
We now rewrite this system in a convenient form. 
Note that 
\begin{align}
    \begin{pmatrix} 0 & \omega_{\T^2}^{\C} \\ \omega_{\T^2}^{\C} & 0 \end{pmatrix} 
    =
    \begin{pmatrix} 0 & \sigma^3 \\ \sigma^3 & 0 \end{pmatrix}
    = \gamma^0 \gamma^3  = -\gamma^3\gamma^0.
\end{align}
Thus if we fix a~$\zeta$ and denote
\begin{align}\label{eq:Dirac-operators}
    \D_{\T^2}
    \coloneqq \begin{pmatrix}  0 & \pd_{\T^2} \\ \pd_{\T^2} & 0 \end{pmatrix}= -i\sum_{k=1}^2 \gamma^0\gamma^k\p_k, 
    & & \mbox{ and } & & 
    \pD_{\T^2} \coloneqq \D_{\T^2}+\zeta\gamma^0\gamma^3+m\gamma^0, 
\end{align}
then the equation in~$\varphi$ takes the form
\begin{align}\label{eq:NDE-T2}
    \pD_{\T^2}\varphi- a\varphi -\p F(\varphi)=0. 
\end{align}
This is an elliptic system on the two-dimensional torus~$\T^2= \faktor{\R^2}{\Gamma(\ell_1,\ell_2)}$.

\section{Spectral properties of the Dirac operators}\label{sect:spectral}

On a general closed spin manifold~$M$, the qualitative spectral properties of the operators 
\begin{align}
    \pd_M + f(x)\omega^{\C}_M, \qquad \mbox{ with } f\in C^\infty(M, \R_+)
\end{align}
are investigated in~\cite{SireXu2023variational}. 
Here we need more quantitative spectral properties. 
In our setting the coefficients are constant, and the torus Dirac spectra can be computed explicitly. 
Observe from the Clifford relations that
    \begin{align}
        \D_{\T^2}\parenthesis{\zeta\gamma^0\gamma^3+m\gamma^0}
        = -\parenthesis{\zeta\gamma^0\gamma^3+m\gamma^0} \D_{\T^2}, 
        & & 
        \mbox{ and } 
        & & 
        \parenthesis{\zeta\gamma^0\gamma^3+m\gamma^0}^2
        =\parenthesis{\zeta^2+m^2}I_4.
    \end{align}
Thus we have
\begin{align}
    \pD_{\T^2}^2
    =\parenthesis{ \D_{\T^2} + \zeta \gamma^0\gamma^3 + m\gamma^0}^2 
    =\D_{\T^2}^2 + \parenthesis{\zeta^2+m^2}I_4.
\end{align}
where~$I_4$ denotes the~$4\times 4$ identity matrix.
We first determine the spectrum of massless Dirac operator~$\D_{\T^2}$. 

Recall that the eigenvalues of~$\pd_{\T^2}$ associated to the trivial spin structure on the torus~$\T^2=\faktor{\R^2}{\Gamma(\ell_1,\ell_2)}$ are computed in~\cite{Friedrich1984zur} and~\cite{Ginoux2009Dirac}:  
\begin{align}
    \Spect(\pd_{\T^2}) =
    \braces{ \pm 2\pi |\zeta^*| \mid \zeta^*\in\Gamma(\ell_1,\ell_2)^*},
\end{align}
and the complex multiplicity of~$0$ is~$2$ (with eigenspinors given by the two linearly independent constant spinors, say~$(1,0)^T$ and~$(0,1)^T$), while the complex multiplicity of the nonzero eigenvalue~$2\pi|\zeta^*|$ equals the number of elements in~$\Gamma(\ell_1,\ell_2)^*$ whose length equals~$|\zeta^*|$.
This is a symmetric set with respect to the origin in the real line, whether or not multiplicities are counted. 
We enumerate the eigenvalues of~$\pd_{\T^2}$ in a non-decreasing order with multiplicities counted: 
\begin{align}\label{eq:spectrum-pd}
        \cdots \leq \lambda_{-k}\leq  \cdots \leq \lambda_{-1} <0=\lambda_{0,1}=\lambda_{0,2}<\lambda_1\leq \cdots \leq \lambda_k \cdots  
    \end{align}
with~$\lambda_{-k}=-\lambda_k$. 
Let~$\braces{\eta_k\mid k\in\mathbb{Z},k\neq 0} \cup \braces{\eta_{0,1},\eta_{0,2}}$ be a complete orthonormal basis of~$L^2(\T^2,\C^2)$ consisting of eigenspinors:
\begin{align}
    \pd_{\T^2}\eta_k = \lambda_k \eta_k, \qquad \forall k\neq 0
\end{align}
\begin{align}
    \pd_{\T^2}\eta_{0,j}=\lambda_{0,j}\eta_{0,j}=0, \quad j=1,2. 
\end{align}
Then, similar to the discussion in~\cite{WZ2026Stationary}, we have 
\begin{itemize}
    \item[(1)] $\Spect(\pd_{\T^2})\subset\Spect(\D_{\T^2})$, and for each~$\lambda_k\in\Spect(\pd_{\T^2})\setminus \braces{0}$,
    \begin{align}
        \begin{pmatrix} \eta_k \\ \eta_k \end{pmatrix}, 
        \begin{pmatrix} \eta_{-k} \\ -\eta_{-k} \end{pmatrix}
        \in \Eigen(\D_{\T^2};\lambda_k)
    \end{align}
    and the above two spinors are linearly independent (but not normalized).
    Meanwhile the~$0$-eigenspinors of~$\D_{\T^2}$ are given by the constant sections 
    \begin{align}
        \psi_{0,1}=\begin{pmatrix} 1 \\ 0 \\ 0 \\ 0 \end{pmatrix}, \quad 
        \psi_{0,2}=\begin{pmatrix} 0 \\ 1 \\ 0 \\ 0 \end{pmatrix}, \quad 
        \psi_{0,3}=\begin{pmatrix} 0 \\ 0 \\ 1 \\ 0 \end{pmatrix}, \quad 
        \psi_{0,4}=\begin{pmatrix} 0 \\ 0 \\ 0 \\ 1 \end{pmatrix}. 
    \end{align}

    \item[(2)] $\Spect(\D_{\T^2})\subset\Spect(\pd_{\T^2})$, and if~$\D_{\T^2}\psi=\lambda\psi$, then
    \begin{align}
        \pd_{\T^2}(\psi_{\up}+\psi_{\down}) = \lambda (\psi_{\up}+\psi_{\down}), & & 
        \pd_{\T^2}(\psi_{\up}-\psi_{\down}) =-\lambda (\psi_{\up}-\psi_{\down}).
    \end{align}
    Together with the construction in (1) we see that both~$\lambda$ and~$-\lambda$ are in~$\Spect(\pd_{\T^2})$. 
\end{itemize}
Moreover, the operator~$\D_{\T^2}$ is unitarily equivalent to~$\pd_{\T^2}\oplus (-\pd_{\T^2})$ under the transform 
\begin{align}
    \begin{pmatrix}\psi_{\up} \\ \psi_{\down} \end{pmatrix}
    \mapsto 
    \frac{1}{\sqrt{2}}
    \begin{pmatrix} \psi_{\up}+\psi_{\down} \\ \psi_{\up}-\psi_{\down} \end{pmatrix}. 
\end{align}
Recalling~$\Spect(\pd_{T^2})$ is symmetric in~$\R$, we thus have 
\begin{align}
    \Eigen(\D_{\T^2};\lambda) \cong \Eigen(\pd_{\T^2};\lambda)\oplus\Eigen(\pd_{\T^2};-\lambda), & & 
    \Mtp(\D_{\T^2};\lambda)= 2\Mtp(\pd_{\T^2};\lambda).
\end{align}
This proves the first half of the following lemma. 

\begin{lemma}\label{lemma:spectrum}
    Let~$\mathbb{T}^2=\faktor{\R^2}{\Gamma(\ell_1,\ell_2)}$ be equipped with the trivial spin structure, and let~$\D_{\T^2}, \pD_{\T^2}$ be defined as in~\eqref{eq:Dirac-operators}. 
    \begin{itemize}
        \item[(i)] The spectrum of~$\D_{\T^2}$ is 
        \begin{align}
            \Spect(\D_{\T^2})= \Spect(\pd_{\T^2})
            =\braces{\pm 2\pi |\zeta^*| \mid \zeta^*\in \Gamma(\ell_1,\ell_2)^*\subset\R^2}
        \end{align}
        and for each~$\lambda\in \Spect(\pd_{\T^2})$,~$\Mtp(\D_{\T^2};\lambda)= 2\Mtp(\pd_{\T^2};\lambda)$.

        \item[(ii)] The spectrum of~$\pD_{\T^2}$ is 
            \begin{align}
                \Spect(\pD_{\T^2})=\braces{ \pm \sqrt{\zeta^2+\lambda^2+m^2} \mid \lambda \in \Spect(\D_{\T^2})} 
            \end{align}
            with multiplicities given by
            \begin{align}
                \Mtp(\pD_{\T^2};\sqrt{\zeta^2+\lambda^2+m^2})
                =\Mtp(\pD_{\T^2};-\sqrt{\zeta^2+\lambda^2+m^2})
                =\Mtp(\D_{\T^2}; \lambda),
            \end{align}
            for any~$\lambda\in \Spect(\D_{\T^2})\setminus \braces{0}$, while 
            \begin{align}
                \Mtp(\pD_{\T^2};\sqrt{\zeta^2+m^2})
                =\Mtp(\pD_{\T^2};-\sqrt{\zeta^2+m^2})=2.
            \end{align}
    \end{itemize}
    
\end{lemma}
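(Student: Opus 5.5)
Part (i) has already been established above via the unitary equivalence $\D_{\T^2}\cong\pd_{\T^2}\oplus(-\pd_{\T^2})$ and the symmetry of $\Spect(\pd_{\T^2})$. So the work is in (ii). The plan is to exploit the anticommutation relation
$$\D_{\T^2}B=-B\D_{\T^2},\qquad B\coloneqq\zeta\gamma^0\gamma^3+m\gamma^0,\qquad B^2=(\zeta^2+m^2)I_4,$$
and to work inside each finite-dimensional invariant subspace. Since $B$ is a constant Hermitian matrix, it maps $\Eigen(\D_{\T^2};\lambda)$ onto $\Eigen(\D_{\T^2};-\lambda)$. Hence for each $\lambda>0$ in $\Spect(\D_{\T^2})$, the space $V_\lambda\coloneqq\Eigen(\D_{\T^2};\lambda)\oplus\Eigen(\D_{\T^2};-\lambda)$ is invariant under $\pD_{\T^2}$, and so is $V_0\coloneqq\ker\D_{\T^2}$, which consists of the constant spinors. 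These spaces are mutually orthogonal and span $L^2(\T^2,\C^4)$. It therefore suffices to diagonalize $\pD_{\T^2}$ on each of them.

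First, on $V_0$ we have $\pD_{\T^2}=B$, a Hermitian $4\times4$ matrix with $B^2=(\zeta^2+m^2)I_4$. Its eigenvalues are thus $\pm\sqrt{\zeta^2+m^2}$. To get each sign with multiplicity exactly $2$, I would show that $\operatorname{tr}B=0$. This follows because $\gamma^0$ and $\gamma^0\gamma^3=\begin{pmatrix}0&\sigma^3\\ \sigma^3&0\end{pmatrix}$ are both traceless. Alternatively, $\gamma^1$ anticommutes with $B$ and hence swaps the two eigenspaces.

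Second, on $V_\lambda$ with $\lambda>0$, we have $\pD_{\T^2}^2=(\lambda^2+\zeta^2+m^2)I$. So the eigenvalues of $\pD_{\T^2}|_{V_\lambda}$ lie in $\{\pm\mu\}$ with $\mu=\sqrt{\lambda^2+\zeta^2+m^2}$. For the multiplicities, I would find an operator that anticommutes with $\pD_{\T^2}$ on $V_\lambda$ and is invertible there. A natural candidate is $\D_{\T^2}-\frac{\lambda^2}{\zeta^2+m^2}B$, but it is cleaner to argue via traces: $\operatorname{tr}(\pD_{\T^2}|_{V_\lambda})=\operatorname{tr}(\D_{\T^2}|_{V_\lambda})+\operatorname{tr}(B|_{V_\lambda})$. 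The first term vanishes because $\D_{\T^2}$ has eigenvalues $\pm\lambda$ with equal multiplicity $\Mtp(\D_{\T^2};\lambda)$, using part (i) and the symmetry of the spectrum. The second term vanishes because $B$ maps $\Eigen(\lambda)$ into $\Eigen(-\lambda)$ and vice versa, so it is off-diagonal in the block decomposition. Hence $\pm\mu$ each occur with multiplicity $\frac12\dim V_\lambda=\Mtp(\D_{\T^2};\lambda)$.

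Finally, I would assemble the pieces. Distinct values of $\lambda\ge0$ give distinct values of $\mu$, so the eigenvalue $\pm\sqrt{\zeta^2+\lambda^2+m^2}$ of $\pD_{\T^2}$ arises only from $V_{|\lambda|}$. Summing over the invariant blocks then gives the claimed spectrum and multiplicities; here $\lambda$ and $-\lambda$ give the same value, consistent with the indexing over $\lambda\in\Spect(\D_{\T^2})$ in the statement. The only delicate point is the multiplicity count. The trace argument requires knowing that $\Eigen(\D_{\T^2};\pm\lambda)$ have equal dimension, and this comes from part (i) (or directly from $B$ being an isomorphism between them). I expect no real obstacle beyond keeping this bookkeeping straight.
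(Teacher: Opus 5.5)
Your proof is correct, but it counts multiplicities differently from the paper. Both arguments rest on the same facts: $\D_{\T^2}B=-B\D_{\T^2}$ with $B=\zeta\gamma^0\gamma^3+m\gamma^0$, $B^2=(\zeta^2+m^2)I_4$, and the resulting $\pD_{\T^2}$-invariance of $\Eigen(\D_{\T^2};\lambda)\oplus\Eigen(\D_{\T^2};-\lambda)$.

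The paper works with explicit bases:
\begin{itemize}
\item It picks an orthonormal basis $\psi_1,\dots,\psi_d$ of $\Eigen(\D_{\T^2};\lambda)$ and sets $\phi_j=B\psi_j/\sqrt{\zeta^2+m^2}$.
\item This splits the block into $d$ explicit invariant planes, on each of which $\pD_{\T^2}$ is the $2\times 2$ matrix with rows $(\lambda,\sqrt{\zeta^2+m^2})$ and $(\sqrt{\zeta^2+m^2},-\lambda)$.
\item For $\lambda=0$ it writes down the $4\times 4$ matrix $\mathscr{D}$ on constant spinors and lists its eigenvectors.
\end{itemize}

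You instead argue without bases. On each block $\pD_{\T^2}$ is self-adjoint with scalar square and zero trace: $B$ is block off-diagonal, and the $\pm\lambda$ eigenspaces have equal dimension. On the constant spinors you use $\operatorname{tr}B=0$.

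Your route is shorter and treats all blocks uniformly. The paper's route also yields the eigenvectors, in particular the explicit space $\mathscr{P}=\Eigen(\pD_{\T^2};\sqrt{\zeta^2+m^2})$. The identity $\bar{e}e=\frac{m}{\sqrt{\zeta^2+m^2}}|e|^2$ on $\mathscr{P}$, used in Lemma~\ref{lemma:null-cone-separation} and Lemma~\ref{lemma:local control}, is read off from those eigenvectors. Your trace count does not give this identity, though it can be recovered abstractly from $\gamma^0B+B\gamma^0=2mI_4$.

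One side remark in your proposal is false: $\gamma^1$ does not anticommute with $B$ when $\zeta\neq 0$. In fact $\gamma^1$ commutes with $\gamma^0\gamma^3$, since both $\gamma^1\gamma^0\gamma^3$ and $\gamma^0\gamma^3\gamma^1$ equal the block matrix $\operatorname{diag}(\sigma^1\sigma^3,-\sigma^1\sigma^3)$. The matrix that works is $\gamma^0\gamma^1$, which anticommutes with both $\gamma^0$ and $\gamma^0\gamma^3$; this is exactly why $\D_{\T^2}$ anticommutes with $B$. Your main argument uses the trace, so this slip does not affect the proof.

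Your other suggested candidate, $\D_{\T^2}-\frac{\lambda^2}{\zeta^2+m^2}B$, is valid. It anticommutes with $\pD_{\T^2}$ on the $\lambda$-block and squares to $\lambda^2\bigl(1+\frac{\lambda^2}{\zeta^2+m^2}\bigr)>0$ there.
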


\begin{proof}
    It remains to determine the spectrum of~$\pD_{\T^2}$.

    First note that the operator~$\D_{\T^2}^2$ is an elliptic operator of Laplacian type. 
    It is also self-adjoint on a closed manifold. 
    Every eigenspinor of~$\D_{\T^2}$ is automatically an eigenspinor of~$\D_{\T^2}^2$.
    Since such eigenspinors form a complete basis for the~$L^2$ space of sections of the bundle~$\Sigma\T^2\oplus \Sigma\T^2$, we see that
    \begin{align}
        \Spect(\D_{\T^2}^2)=\braces{\lambda^2 \mid \lambda\in\Spect(\D_{\T^2})}
    \end{align}
    and for~$\lambda\neq 0$, 
    \begin{align}\label{eq:Eigen-lambda2}
        \Eigen(\D_{\T^2}^2;\lambda^2)
        =\Eigen(\D_{\T^2};\lambda)
        \oplus
        \Eigen(\D_{\T^2};-\lambda),
    \end{align}
    \begin{align}
        \Mtp(\D_{\T^2}^2;\lambda^2)
        = \Mtp(\D_{\T^2};\lambda) 
        + \Mtp(\D_{\T^2};-\lambda)
        =2\Mtp(\D_{\T^2};\lambda), 
    \end{align}
    while~$\Mtp(\D_{\T^2}^2;0)=4=\Mtp(\D_{\T^2};0)$.

    As a self-adjoint elliptic operator on a closed manifold, the operator~$\pD_{\T^2}$ also has only eigenvalues of finite multiplicities. 
    Suppose~$\rho\in\Spect(\pD_{\T^2})$ with~$\pD_{\T^2}\psi=\rho\psi$.
    Then 
    \begin{align}
        \rho^2\psi=\pD_{\T^2}^2\psi 
        = \D_{\T^2}^2\psi + (\zeta^2+m^2)\psi. 
    \end{align}
    Thus~$\D_{\T^2}^2\psi= (\rho^2-\zeta^2-m^2)\psi=\lambda^2\psi$ for some~$\lambda\in\Spect(\D_{\T^2})$. 
    It follows that
    \begin{align}
        \rho\in \braces{\pm \sqrt{\zeta^2+\lambda^2+m^2}}. 
    \end{align}
    Next we show that both of~$\pm \sqrt{\zeta^2+\lambda^2+m^2} $ appear in the spectrum of~$\pD_{\T^2}$, with the same multiplicity as~$\lambda\in\Spect(\D_{\T^2})$.

    For each~$\lambda>0$, consider the linear map
    \begin{align}
        \frac{\zeta\gamma^0\gamma^3+m\gamma^0}{\sqrt{\zeta^2+m^2}}\colon \Eigen(\D_{\T^2};\lambda)\to \Eigen(\D_{\T^2};-\lambda).
    \end{align}
    It is well-defined: if~$\D_{\T^2}\psi=\lambda\psi$, then 
    \begin{align}
        \D_{\T^2}\parenthesis{\frac{1}{\sqrt{\zeta^2+m^2}}(\zeta\gamma^0\gamma^3+m\gamma^0)\psi}
        =&-\frac{1}{\sqrt{\zeta^2+m^2}}\parenthesis{\zeta\gamma^0\gamma^3+m\gamma^0}
        \D_{\T^2}\psi \\
        =&-\lambda \parenthesis{\frac{1}{\sqrt{\zeta^2+m^2}}(\zeta\gamma^0\gamma^3+m\gamma^0)\psi}. 
    \end{align}
    It is also invertible since 
    \begin{align}
        \parenthesis{ \frac{\zeta\gamma^0\gamma^3+m\gamma^0}{\sqrt{\zeta^2+m^2}} }^2=I_4.
    \end{align}
    Let~$d\equiv \Mtp(\D_{\T^2};\lambda)$ and choose an orthonormal basis~$\psi_1,\dots, \psi_d$ of~$\Eigen(\D_{\T^2};\lambda)$. 
    Set
    \begin{align}
        \phi_j \coloneqq \frac{1}{\sqrt{\zeta^2+m^2}} \parenthesis{\zeta\gamma^0\gamma^3+m\gamma^0}\psi_j,\quad 1\leq j\leq d. 
    \end{align}
    Then~$\phi_1,\dots,\phi_d$ form a basis of~$\Eigen(\D_{\T^2};-\lambda)$. 
    Moreover,
    \begin{align}
        \parenthesis{\zeta\gamma^0\gamma^3+m\gamma^0}\psi_j= \sqrt{\zeta^2+m^2}\phi_j, 
        & & 
        \parenthesis{\zeta\gamma^0\gamma^3+m\gamma^0}\phi_j = \sqrt{\zeta^2+m^2}\psi_j.
    \end{align}
    Consequently,
    \begin{align}
        \pD_{\T^2}\psi_j = \lambda \psi_j+\sqrt{\zeta^2+m^2}\phi_j,
        & & 
        \pD_{\T^2}\phi_j = \sqrt{\zeta^2+m^2}\psi_j-\lambda \phi_j.
    \end{align}
    Hence the two-dimensional space~$\Span_{\C}\{\psi_j,\phi_j\}$ is invariant under~$\pD_{\T^2}$,
    on which~$\pD_{\T^2}$ acts via a matrix
    \begin{align}
        \begin{pmatrix}
            \lambda & \sqrt{\zeta^2+m^2}\\
            \sqrt{\zeta^2+m^2} & -\lambda
        \end{pmatrix}.
    \end{align}
    Its characteristic polynomial is~$p(\mu)=\mu^2-\lambda^2-\zeta^2-m^2$, and thus its eigenvalues are
    \begin{align}
        \mu=\pm\sqrt{\lambda^2+\zeta^2+m^2}.
    \end{align}

    Recall~$\Eigen(\D_{\T^2}^2;\lambda^2)$ is decomposed as in~\eqref{eq:Eigen-lambda2}.
    The above construction decomposes this space into~$d$ two-dimensional invariant subspaces. Hence each of the eigenvalues~$\pm\sqrt{\lambda^2+\zeta^2+m^2}$ has multiplicity~$d$.  
    Hence
    \begin{align}
        \Mtp\parenthesis{
            \pD_{\T^2};
            \sqrt{\lambda^2+\zeta^2+m^2}}
        =\Mtp\parenthesis{
            \pD_{\T^2};
            -\sqrt{\lambda^2+\zeta^2+m^2}}
        =\Mtp(\D_{\T^2};\lambda).
    \end{align}

    As for the eigenspaces for~$\pm\sqrt{\zeta^2+m^2}$, we simply note that they must arise from~$\Eigen(\D_{\T^2};0)$ and~$\pD_{\T^2}$ acting on constant spinors reduces to the matrix
    \begin{align}
        \mathscr{D}\equiv
        \begin{pmatrix}
            m & 0 & \zeta & 0 \\
            0 & m & 0& -\zeta \\
            \zeta & 0 & -m & 0 \\
            0 & -\zeta & 0 & -m
        \end{pmatrix}. 
    \end{align}
    This matrix has eigenvalues~$\pm\sqrt{\zeta^2+m^2}$ with eigenspaces 
    \begin{align}
        \Eigen(\mathscr{D};\sqrt{\zeta^2+m^2})
        =\Span\braces{ \begin{pmatrix} 1 \\ 0 \\ \frac{\zeta}{ m+\sqrt{\zeta^2+m^2} } \\ 0  \end{pmatrix}, 
        \begin{pmatrix} 0 \\ 1 \\ 0 \\ \frac{-\zeta}{m+\sqrt{\zeta^2+m^2}} \end{pmatrix} }, 
    \end{align}
    \begin{align}
        \Eigen(\mathscr{D};-\sqrt{\zeta^2+m^2})
        =\Span\braces{ \begin{pmatrix} \frac{-\zeta}{m+\sqrt{\zeta^2+m^2}} \\ 0 \\ 1 \\ 0  \end{pmatrix}, 
        \begin{pmatrix} 0 \\ \frac{\zeta}{m+\sqrt{\zeta^2+m^2}} \\ 0 \\ 1 \end{pmatrix}}. 
    \end{align}
    Note that these constant eigenspinors reduce to the~$\psi_{0,j}$'s when~$\zeta=0$. 
    This proves the second part of the lemma. 
\end{proof}

\section{Null-cone separation and a resolvent estimate}\label{sect:key lemma}

The space~$\Eigen(\pD_{\T^2}, \sqrt{\zeta^2+m^2})$ consists of constant spinors on $\mathbb{T}^2$, but such spinors no longer lie in the $(+1)$-eigenspace of $\gamma^0$, a property used in the contradiction argument in~\cite{WZ2026Stationary}.
Nevertheless, they cannot lie in the Lorentz null cone.  
Indeed, any~$e\in\Eigen(\pD_{\T^2};\sqrt{\zeta^2+m^2})$ satisfies
\begin{align}
    \bar{e}e=\frac{m}{\sqrt{m^2+\zeta^2}}|e|^2.
\end{align}
which is nonzero unless~$e=0$. 
In particular,~$\mathcal{N}\cap \Eigen(\pD_{\T^2};\sqrt{\zeta^2+m^2})=\braces{0}$.  

Let~$\proj_{m,\zeta}$ be the~$L^2$-orthogonal projection to~$\Eigen(\pD_{\T^2};\sqrt{m^2+\zeta^2})$ and~$\proj_{m,\zeta}^\perp = \id -\proj_{m,\zeta}$ the complementary projection.
The following lemma gives a quantitative separation of the null cone from the lowest positive eigenspace~$\Eigen(\pD_{\T^2};\sqrt{\zeta^2+m^2})$. 
\begin{lemma}\label{lemma:null-cone-separation}
    For any~$\nu\in (1,2)$, there exist~$\kappa_1>0$ and~$\kappa_2>0$ such that
    \begin{align}\label{eq:kappa}
        \|\varphi\|_{L^4}=1,\quad \|\bar{\varphi} \varphi\|_{L^\nu}\leq\kappa_1 \quad\Longrightarrow\quad \|\proj_{m,\zeta}^\perp \varphi\|_{L^4}\geq\kappa_2.
    \end{align}
\end{lemma}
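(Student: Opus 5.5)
Argue by contradiction. If the claim fails for some $\nu\in(1,2)$, then for every choice of $\kappa_1=\kappa_2=1/n$ the implication breaks, so there is a sequence $\varphi_n$ with
\begin{align}
    \|\varphi_n\|_{L^4}=1,\qquad \|\bar{\varphi}_n\varphi_n\|_{L^\nu}\leq \tfrac1n,\qquad \|\proj_{m,\zeta}^\perp\varphi_n\|_{L^4}< \tfrac1n .
\end{align}
Write $\varphi_n=e_n+w_n$ with $e_n=\proj_{m,\zeta}\varphi_n\in\Eigen(\pD_{\T^2};\sqrt{\zeta^2+m^2})$ and $w_n=\proj_{m,\zeta}^\perp\varphi_n$. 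By Lemma~\ref{lemma:spectrum}, this eigenspace is two-dimensional and consists of constant spinors. Hence $e_n$ ranges over a fixed finite-dimensional space, on which all norms are equivalent. From $\|e_n\|_{L^4}\le \|\varphi_n\|_{L^4}+\|w_n\|_{L^4}\le 1+1/n$ the sequence $e_n$ is bounded, so after passing to a subsequence $e_n\to e$ uniformly, with $e$ a constant spinor in the eigenspace. Moreover $\|e\|_{L^4}=\lim\|e_n\|_{L^4}=1$, since $\|w_n\|_{L^4}\to0$ and $\|\varphi_n\|_{L^4}=1$. In particular $e\neq0$.

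Next pass to the limit in the Lorentz scalar. Write
\begin{align}
    \bar{\varphi}_n\varphi_n-\bar e e=(\bar e_n e_n-\bar e e)+2\,\mathrm{Re}\Abracket{\gamma^0 e_n,w_n}+\Abracket{\gamma^0w_n,w_n}.
\end{align}
We estimate this difference in $L^\nu$ using $\nu<2$ and the finite measure of $\T^2$:
\begin{itemize}
    \item The first term tends to zero uniformly.
    \item The middle term is bounded in $L^\nu$ by $C\|e_n\|_{L^\infty}\|w_n\|_{L^4}\to0$, since $\nu\le4$.
    \item The last term is bounded by $\|w_n\|_{L^{2\nu}}^2\le C\|w_n\|_{L^4}^2\to0$, since $2\nu<4$.
\end{itemize}
Therefore $\bar{\varphi}_n\varphi_n\to \bar e e$ in $L^\nu$. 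Since $\|\bar{\varphi}_n\varphi_n\|_{L^\nu}\to0$, we get $\bar e e=0$ pointwise.

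This contradicts the null-cone computation stated just before the lemma: for $e\in\Eigen(\pD_{\T^2};\sqrt{\zeta^2+m^2})$,
\begin{align}
    \bar e e=\frac{m}{\sqrt{m^2+\zeta^2}}|e|^2 .
\end{align}
It is worth checking this directly from the explicit basis in the proof of Lemma~\ref{lemma:spectrum}. Put $t=\zeta/(m+\sqrt{\zeta^2+m^2})$, and take $e=c_1(1,0,t,0)^T+c_2(0,1,0,-t)^T$. Then
\begin{align}
    \bar e e=(|c_1|^2+|c_2|^2)(1-t^2),\qquad |e|^2=(|c_1|^2+|c_2|^2)(1+t^2),
\end{align}
and $(1-t^2)/(1+t^2)=m/\sqrt{m^2+\zeta^2}$. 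Because $m>0$, the condition $\bar e e=0$ forces $e=0$, contradicting $\|e\|_{L^4}=1$.

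The only delicate points are the finite-dimensionality and constancy of the eigenspace, which give the compactness, and the restriction $\nu<2$, which keeps the quadratic term in $w_n$ controlled by the $L^4$ norm. Both are already in hand. If one wants explicit constants, the same argument can be run quantitatively. Set $c_0=m/\sqrt{m^2+\zeta^2}$. Then $\|\bar e e\|_{L^\nu}=c_0\|e\|_{L^{2\nu}}^2$, which is comparable to $\|e\|_{L^4}^2$ because $e$ is constant. The error terms are bounded by $C(\|w\|_{L^4}+\|w\|_{L^4}^2)$. Choosing $\kappa_2$ small, and then $\kappa_1$ smaller still, yields the implication directly.
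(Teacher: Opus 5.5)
Your proof is correct and follows the paper's own argument. Both argue by contradiction, use compactness of the projection onto the two-dimensional space of constant eigenspinors, obtain convergence of $\bar{\varphi}_n\varphi_n$ in $L^\nu$ using $\nu<2$, and conclude from the identity $\bar e e=\frac{m}{\sqrt{m^2+\zeta^2}}|e|^2$. The only cosmetic difference is that you expand the Lorentz scalar into three terms, whereas the paper passes through $L^4$ convergence of $\varphi_n$ to get $L^2$ (hence $L^\nu$) convergence. Your closing quantitative remark is also sound and in fact gives explicit $\kappa_1,\kappa_2$, which the paper's compactness argument does not.
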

\begin{proof}
    The corresponding statement for~$\nu\in(1,\frac{3}{2})$ was proved in~\cite{WZ2026Stationary}; but the argument works also for~$\nu\in (1,2)$.  
    Since these constants are crucial for us, we give the detailed proof here. 
    
    To see~\eqref{eq:kappa}, we argue by contradiction. 
    Suppose  there exists a sequence~$(\varphi_k)$ such that
    \begin{align}
        \|\varphi_k\|_{L^4}=1, \quad \|\bar{\varphi}_k\varphi_k\|_{L^\nu} \to 0,
        \quad \|\proj_{m,\zeta}^\perp \varphi_k\|_{L^4}\to 0, \qquad \mbox{ as } \; k\to+\infty.
    \end{align}
    We observe that ~$\|\proj_{m,\zeta}\varphi_k\|_{L^4}\leq C\|\varphi_k\|_{L^4}=C$, with~$C$ depending on~$\T^2$ and the finite-dimensional space~$\Eigen(\pD_{\T^2};\sqrt{\zeta^2+m^2})$. 
    Since any two norms on a finite-dimensional space are equivalent, the sequence~$(\proj_{m,\zeta}\varphi_k)$ is bounded in~$\Eigen(\pD_{\T^2};\sqrt{m^2+\zeta^2})\cong\C^2$. 
    Therefore, upon passing to a subsequence if necessary, the sequence~$\proj_{m,\zeta}\varphi_k$ converges to some~$\varphi_{*}$.
    Together with the convergence~$\|\proj_{m,\zeta}^\perp \varphi_k\|_{L^4}\to0$, we see that~$\varphi_k$ converges in~$L^4$ to~$\varphi_*$ with~$\|\varphi_{*}\|_{L^4}=1$, and~$\bar{\varphi}_k\varphi_k$ converges in~$L^2$ to~$\bar{\varphi}_*\varphi_*$. 
    Since~$\nu<2$, we see that~$\bar{\varphi}_k\varphi_k$ also converges in~$L^\nu$ to~$\bar{\varphi}_*\varphi_*$, hence~$\bar{\varphi}_*\varphi_*=0$ a.e. 
    This is impossible, since~$\varphi_*\in\Eigen(\pD_{\T^2};\sqrt{m^2+\zeta^2})$ and~$\bar{\varphi}_*\varphi_*= \frac{m}{\sqrt{\zeta^2 + m^2}}|\varphi_*|^2$.
    
\end{proof}
      We remark that the above statement actually holds for any~$\nu\geq 1$ and there is no need for the requirement~$\nu<2$.
      This follows by a H\"older inequality argument. 
      Indeed, there holds an even more general principle. 
      We state it here for future reference. 

\begin{lemma}[An abstract separation lemma]\label{lemma:abstract-separation}
    Let~$X$ be a normed linear space and let~$E\subset X$ be a finite-dimensional subspace.
    Let~$(Z,d_Z)$ be a metric space with a distinguished point~$0_Z\in Z$, and let
    \begin{align}
        \Phi\colon X\to Z
    \end{align}
    be continuous.
    Assume that
    \begin{align}\label{eq:trivial-intersection}
        E\cap \Phi^{-1}(0_Z)=\braces{0}.
    \end{align}
    Suppose moreover that~$\rho\colon X\to[0,+\infty)$ has the property that, for every sequence~$(u_n)\subset X$,
    \begin{align}\label{eq:rho-Phi}
        \rho(u_n)\to0
        \qquad\Longrightarrow\qquad
        d_Z(\Phi(u_n),0_Z)\to0.
    \end{align}
    Then there exist constants~$\kappa_1,\kappa_2>0$ such that
    \begin{align}\label{eq:abstract-separation}
        \|u\|_X=1,
        \qquad
        \rho(u)\leq\kappa_1
        \qquad\Longrightarrow\qquad
        \dist_X(u,E)\geq\kappa_2.
    \end{align}
\end{lemma}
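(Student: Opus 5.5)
We argue by contradiction, following the proof of Lemma~\ref{lemma:null-cone-separation}. Suppose no such constants exist. Taking $\kappa_1=\kappa_2=\frac1n$, we obtain a sequence $(u_n)\subset X$ with
\begin{align}
    \|u_n\|_X=1,\qquad \rho(u_n)\leq \tfrac1n,\qquad \dist_X(u_n,E)<\tfrac1n .
\end{align}
By the hypothesis~\eqref{eq:rho-Phi}, the first two properties give $d_Z(\Phi(u_n),0_Z)\to0$.

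Next we use that $E$ is finite-dimensional, so for each $n$ a nearest point $e_n\in E$ exists, with $\|u_n-e_n\|_X=\dist_X(u_n,E)<\frac1n$. If one prefers not to invoke existence of minimizers, any $e_n\in E$ with $\|u_n-e_n\|_X<\frac1n$ will do. Then $\|e_n\|_X\leq 1+\frac1n\leq 2$, so $(e_n)$ is bounded in the finite-dimensional normed space $E$. Since all norms on $E$ are equivalent, closed bounded sets of $E$ are compact. Passing to a subsequence, $e_n\to e_*\in E$ in $\|\cdot\|_X$.

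It follows that $u_n\to e_*$ in $X$. Hence $\|e_*\|_X=\lim\|u_n\|_X=1$, and in particular $e_*\neq0$. By continuity of $\Phi$ we have $\Phi(u_n)\to\Phi(e_*)$ in $Z$, so
\begin{align}
    d_Z(\Phi(e_*),0_Z)=\lim_{n\to\infty} d_Z(\Phi(u_n),0_Z)=0 .
\end{align}
Thus $e_*\in E\cap\Phi^{-1}(0_Z)$ with $e_*\neq0$, which contradicts~\eqref{eq:trivial-intersection}.

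The only slightly delicate point is the compactness step. It uses merely that $E$ is a finite-dimensional subspace with the norm induced from $X$; no completeness of $X$ is needed, because the limit $e_*$ already lies in $E$. Everything else is the continuity of $\Phi$ and the hypothesis on $\rho$. As a consistency check, Lemma~\ref{lemma:null-cone-separation} is recovered with
\begin{align}
    X=L^4,\qquad E=\Eigen(\pD_{\T^2};\sqrt{\zeta^2+m^2}),\qquad Z=L^\nu,\qquad \Phi(\varphi)=\bar\varphi\varphi,\qquad \rho(\varphi)=\|\bar\varphi\varphi\|_{L^\nu}.
\end{align}
Continuity of $\Phi$ from $L^4$ to $L^2\subset L^\nu$ holds for $\nu\le2$, and on a finite-dimensional $E$ one has $\dist$ comparable to $\|\proj^\perp_{m,\zeta}\varphi\|_{L^4}$.
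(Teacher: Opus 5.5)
Your proof is correct and follows essentially the same argument as the paper. Both take a contradiction sequence with $\|u_n\|_X=1$, $\rho(u_n)\to0$ and $\dist_X(u_n,E)\to0$, approximate by $e_n\in E$, and use finite dimensionality of $E$ to extract $e_n\to e_*$ with $\|e_*\|_X=1$. Continuity of $\Phi$ together with uniqueness of limits in $Z$ then forces $\Phi(e_*)=0_Z$, contradicting $E\cap\Phi^{-1}(0_Z)=\{0\}$.
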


\begin{proof}
    Suppose not, then there would exist a sequence~$(u_n)\subset X$ such that
    \begin{align}
        \|u_n\|_X=1,
        \qquad
        \rho(u_n)\leq\frac{1}{n},
        \qquad
        \dist_X(u_n,E)\leq\frac{1}{n}.
    \end{align}
    Choose~$e_n\in E$ such that~$\|u_n-e_n\|_X\leq \frac{2}{n}$. 
    In particular,~$\|e_n\|_X\to1$. 
    Since~$E$ is finite dimensional, after passing to a subsequence if necessary, there exists~$e_*\in E$ such that~$e_n\to e_*$ in~$X$. 
    Consequently,
    \begin{align}
        u_n\to e_*
        \qquad\mbox{ in }X,
        \qquad
        \|e_*\|_X=1.
    \end{align}
    By the continuity of~$\Phi$,
    \begin{align}
        d_Z(\Phi(u_n),\Phi(e_*))\to0.
    \end{align}
    On the other hand,~$\rho(u_n)\to0$, and hence~\eqref{eq:rho-Phi} gives~$ d_Z(\Phi(u_n),0_Z)\to0$. 
    By uniqueness of limits in the metric space~$Z$, we must have~$\Phi(e_*)=0_Z$. 
    Thus~$e_*\in E\cap\Phi^{-1}(0_Z)=\braces{0}$, contradicting~$\|e_*\|_X=1$.
\end{proof}

\begin{rmk}
    To apply the above general principle to our case, we let
    \begin{align}
        X=L^p(\T^2,\C^4),
        \qquad
        \Phi(\varphi)=\bar{\varphi}\varphi,
        \qquad
        \rho(\varphi)=\|\bar{\varphi}\varphi\|_{L^q}.
    \end{align}
    with~$1\leq p,q<+\infty$. 
    Set
    \begin{align}
        r\coloneqq \min\braces{q,\frac{p}{2}}.
    \end{align}
    If~$r\geq1$, we take~$Z=L^r(\T^2)$ with its usual norm metric, while if~$0<r<1$, we equip~$L^r(\T^2)$ with the metric
    \begin{align}
        d_r(f,g)\coloneqq \int_{\T^2}|f-g|^r\dv_{\T^2}.
    \end{align}
    Since~$r\leq q$ and~$\T^2$ has finite volume,
    \begin{align}
        \|\bar{\varphi}_n\varphi_n\|_{L^q}\to0
        \qquad\Longrightarrow\qquad
        \bar{\varphi}_n\varphi_n\to0
        \quad\mbox{ in }Z.
    \end{align}
    Moreover, since~$2r\leq p$, the map
    \begin{align}
        \Phi\colon L^p(\T^2,\C^4)\to Z,
        \qquad
        \varphi\mapsto\bar{\varphi}\varphi,
    \end{align}
    is continuous.
    Hence, for every finite-dimensional subspace~$E\subset L^p(\T^2,\C^4)$ satisfying
    \begin{align}
        E\cap\braces{\varphi\mid \bar{\varphi}\varphi=0\;\mbox{ a.e.}}=\braces{0},
    \end{align}
    there exist~$\kappa_1,\kappa_2>0$ such that
    \begin{align}
        \|\varphi\|_{L^p}=1,
        \qquad
        \|\bar{\varphi}\varphi\|_{L^q}\leq\kappa_1
        \qquad\Longrightarrow\qquad
        \dist_{L^p}(\varphi,E)\geq\kappa_2.
    \end{align}
    In our case,~$E=\Eigen(\pD_{\T^2};\sqrt{\zeta^2+m^2})$, and~$\dist_{L^p}(\varphi,E)\leq \|\proj_{m,\zeta}^\perp \varphi\|_{L^p}$, so the general lemma applies. 
\end{rmk}
Note that the above compactness argument is qualitative and does not provide explicit values of~$\kappa_1$ and~$\kappa_2$.

\ 

      We also need a resolvent estimate for~$\pD_{\T^2}-a $, for any~$a\in [0,\sqrt{\zeta^2+m^2}]$, on the orthogonal complement of the lowest positive eigenspace~$\Eigen(\pD_{\T^2};\sqrt{\zeta^2+m^2})$.

\begin{lemma}\label{lemma:invertibility-orthogonal}
    There exists a constant~$C=C(m,\zeta,\mathbb{T}^2)$ such that for any~$a\in [0,\sqrt{\zeta^2+m^2}]$, 
    \begin{align}
        \|\proj_{m,\zeta}^\perp \varphi\|_{L^4}\leq C\|\proj_{m,\zeta}^\perp(\pD_{\T^2}-a)\varphi\|_{L^{\frac{4}{3}}}.
    \end{align}
\end{lemma}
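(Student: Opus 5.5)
We use the spectral decomposition of $\pD_{\T^2}$ from Lemma~\ref{lemma:spectrum} and Fourier analysis on $\T^2$. Since $\pD_{\T^2}$ is self-adjoint and $\proj_{m,\zeta}$ is a spectral projection, $\proj_{m,\zeta}^\perp$ commutes with $\pD_{\T^2}-a$. Hence it suffices to show that, for $\psi\coloneqq\proj_{m,\zeta}^\perp\varphi$,
\begin{align}
    \|\psi\|_{L^4}\leq C\|(\pD_{\T^2}-a)\psi\|_{L^{\frac43}}.
\end{align}
On the range of $\proj_{m,\zeta}^\perp$ the spectrum of $\pD_{\T^2}$ is
\begin{align}
    \Spect(\pD_{\T^2})\setminus\braces{\sqrt{\zeta^2+m^2}},
\end{align}
where $\sqrt{\zeta^2+m^2}$ denotes the constant eigenspinors on which $\pD_{\T^2}$ acts as $+\sqrt{\zeta^2+m^2}$. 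The negative constant eigenvalue $-\sqrt{\zeta^2+m^2}$ survives, but it satisfies $|{-\sqrt{\zeta^2+m^2}}-a|\geq\sqrt{\zeta^2+m^2}$ for $a\geq 0$. The remaining eigenvalues are $\pm\mu_\lambda$ with $\mu_\lambda=\sqrt{\zeta^2+m^2+\lambda^2}$ and $|\lambda|\geq\lambda_1>0$. Consequently the distance from $a\in[0,\sqrt{\zeta^2+m^2}]$ to this spectrum is at least
\begin{align}
    \delta_0\coloneqq\min\braces{\sqrt{\zeta^2+m^2},\;\sqrt{\zeta^2+m^2+\lambda_1^2}-\sqrt{\zeta^2+m^2}}>0,
\end{align}
uniformly in $a$. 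Moreover $|\pm\mu_\lambda-a|\geq c\,\mu_\lambda$ for some $c>0$ independent of $a$: for the negative branch this holds with $c=1$, and for the positive branch $\mu_\lambda-a\geq\mu_\lambda-\mu_0$, whose ratio to $\mu_\lambda$ is bounded below once $\lambda\neq0$.

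Step one is to write $R_a\coloneqq(\pD_{\T^2}-a)^{-1}\proj_{m,\zeta}^\perp$ and compare it with $|\pD_{\T^2}|^{-1}$. Define the spectral multiplier
\begin{align}
    M_a\coloneqq(\pD_{\T^2}-a)^{-1}|\pD_{\T^2}|\,\proj_{m,\zeta}^\perp.
\end{align}
Its symbol $\mu/(\mu-a)$ on each eigenspace is bounded by $1/c$. We need $M_a$ to be bounded on $L^{4/3}$, or at least on $H^{-1/2}$, uniformly in $a$.

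The simplest route avoids $L^p$ multiplier theory by going through $H^{1/2}$ and Sobolev embedding. In dimension two, $H^{1/2}(\T^2)\hookrightarrow L^4$, and dually $L^{4/3}\hookrightarrow H^{-1/2}$. Expanding $\psi$ in the eigenbasis, the spectral bound gives
\begin{align}
    \|\psi\|_{H^{1/2}}^2\simeq\sum\mu|\hat\psi|^2\leq c^{-2}\sum\mu^{-1}|\mu-a|^2|\hat\psi|^2\simeq c^{-2}\|(\pD_{\T^2}-a)\psi\|_{H^{-1/2}}^2.
\end{align}
Here the $H^{s}$ norms are the ones defined through $|\pD_{\T^2}|^s$. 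They are equivalent to the standard ones because $|\pD_{\T^2}|\geq\sqrt{\zeta^2+m^2}>0$ and $\pD_{\T^2}^2=-\Delta+(\zeta^2+m^2)$ by the formula $\pD_{\T^2}^2=\D_{\T^2}^2+(\zeta^2+m^2)I_4$. Chaining these gives
\begin{align}
    \|\psi\|_{L^4}\leq C\|\psi\|_{H^{1/2}}\leq C'\|(\pD_{\T^2}-a)\psi\|_{H^{-1/2}}\leq C''\|(\pD_{\T^2}-a)\psi\|_{L^{4/3}},
\end{align}
with constants depending only on $m,\zeta,\T^2$.

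The main obstacle is making the lower bound $|\mu-a|\geq c\mu$ on the range of $\proj_{m,\zeta}^\perp$ genuinely uniform in $a$ up to the endpoint $a=\sqrt{\zeta^2+m^2}$. Uniformity relies on the projection removing exactly the whole eigenspace for $+\sqrt{\zeta^2+m^2}$; that this eigenspace has multiplicity $2$, consisting of constants, is given by Lemma~\ref{lemma:spectrum}. After that, the spectral gap $\lambda_1$ of $\pd_{\T^2}$ provides the uniform constant. The other point to check is that the commutation $\proj_{m,\zeta}^\perp(\pD_{\T^2}-a)=(\pD_{\T^2}-a)\proj_{m,\zeta}^\perp$ is legitimate for $\varphi\in L^4$, possibly only in the distributional sense. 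Since $\proj_{m,\zeta}$ is projection onto constants, it extends to distributions, and the estimate can be proved first for smooth $\varphi$ and then extended by density.
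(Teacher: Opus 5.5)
Your proof is correct. It is not the route of the paper's proof of this lemma, but it is essentially the argument the paper records afterwards in Remark~\ref{rmk:resolvent-estimate}.

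\textbf{The paper's proof.} It argues by contradiction and compactness. It takes $a_k\in[0,\sqrt{\zeta^2+m^2}]$ and $\varphi_k$ with $\|\proj_{m,\zeta}^\perp\varphi_k\|_{L^4}=1$ and $\|\proj_{m,\zeta}^\perp(\pD_{\T^2}-a_k)\varphi_k\|_{L^{4/3}}\to0$. The elliptic estimate bounds $\proj_{m,\zeta}^\perp\varphi_k$ in $W^{1,\frac43}$. One then extracts $a_k\to a_*$ and a limit $\varphi_*\perp\Eigen(\pD_{\T^2};\sqrt{\zeta^2+m^2})$ with $\pD_{\T^2}\varphi_*=a_*\varphi_*$. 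The spectral gap forces $\varphi_*=0$, and the same elliptic estimate together with $W^{1,\frac43}\hookrightarrow L^4$ contradicts the normalization.

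\textbf{Your proof.} You reduce, via $H^{\frac12}\hookrightarrow L^4$ and $L^{\frac43}\hookrightarrow H^{-\frac12}$, to a Hilbert-space inequality. That inequality is a termwise bound $|\mu|\le c^{-2}|\mu|^{-1}|\mu-a|^2$ on the eigenbasis, with $c=1-\sqrt{\zeta^2+m^2}/\sqrt{\zeta^2+m^2+\lambda_1^2}$, which is the paper's $c_0$. The negative branch, including the surviving constant eigenvalue $-\sqrt{\zeta^2+m^2}$, gives $c=1$.

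\textbf{What each route buys.}
\begin{itemize}
\item Your route gives an explicit constant and makes uniformity in $a$ up to the endpoint transparent, with no need to extract $a_k\to a_*$.
\item Your closing density argument is unnecessary. The coefficient inequality holds for any periodic distribution whose image under $\pD_{\T^2}-a$ lies in $H^{-\frac12}$.
\item Your justification of the norm equivalence through $\pD_{\T^2}^2=-\Delta+(\zeta^2+m^2)$ is a self-contained replacement for the paper's citation.
\item The compactness proof is only qualitative. However, it runs entirely on $L^p$ elliptic estimates and Rellich compactness, so it would give $W^{1,p}$ control for any $p\in(1,\infty)$. That would otherwise need the $L^p$ multiplier theory your aside on $M_a$ alludes to.
\end{itemize}

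\textbf{Minor.} The symbol of $M_a$ should be $|\mu|/(\mu-a)$. That aside is not used in your argument and can be dropped.
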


\begin{proof}
    Suppose not, then there exist sequences~$(a_k)\subset [0,\sqrt{\zeta^2+m^2}]$ and~$(\varphi_k)\subset W^{1,\frac{4}{3}}$ such that
        \begin{align}
            \|\proj_{m,\zeta}^\perp\varphi_k\|_{L^4}=1, & &
            \|\proj_{m,\zeta}^\perp(\pD_{\T^2}-a_k)\varphi_k\|_{L^{\frac{4}{3}}}\to 0.
        \end{align}
        We may assume~$a_k\to a_*\in [0,\sqrt{\zeta^2+m^2}]$ by Heine--Borel theorem. 
        Note that for any~$k$,
        \begin{align}
            \|\proj_{m,\zeta}^\perp \varphi_k \|_{W^{1,\frac{4}{3}}}
            \leq& C \parenthesis{ \| \proj_{m,\zeta}^\perp \varphi_k\|_{L^{\frac{4}{3}}} + \|\pD_{\T^2}\proj_{m,\zeta}^\perp \varphi_k\|_{L^{\frac{4}{3}}} } \\
            \leq& C \parenthesis{
            \| \proj_{m,\zeta}^\perp \varphi_k\|_{L^{\frac{4}{3}}}
            + \|(\pD_{\T^2}-a_k)\proj_{m,\zeta}^\perp \varphi_k\|_{L^{\frac{4}{3}}} 
            + a_k\|\proj_{m,\zeta}^\perp \varphi_k\|_{L^{\frac{4}{3}}} } \\
            \leq& C \parenthesis{ (\sqrt{m^2+\zeta^2}+1)\| \proj_{m,\zeta}^\perp \varphi_k\|_{L^{\frac{4}{3}}} + \|\proj_{m,\zeta}^\perp (\pD_{\T^2}-a_k)\varphi_k\|_{L^{\frac{4}{3}}} },
        \end{align}
        where~$C$ is independent of~$k$ and we have used the commutativity of~$\proj_{m,\zeta}^\perp$ with~$(\pD_{\T^2}-a_k)$ in the last inequality.
        Hence the sequence~$(\proj_{m,\zeta}^\perp \varphi_k)$ is also uniformly bounded in~$W^{1,\frac{4}{3}}$ and we may assume
        \begin{align}
            \proj_{m,\zeta}^\perp\varphi_k \to \varphi_*=\proj_{m,\zeta}^\perp \varphi_*\quad  \mbox{ in } \;  L^{\frac{4}{3}}.
        \end{align}
        Moreover, from
        \begin{align}
            \pD_{\T^2}(\proj_{m,\zeta}^\perp\varphi_k)= \proj_{m,\zeta}^\perp (\pD_{\T^2}-a_k)\varphi_k+ a_k \proj_{m,\zeta}^\perp\varphi_k \xrightarrow{L^{\frac{4}{3}}} 0+ a_*\varphi_*
        \end{align}
        it follows that
        \begin{align}
            \pD_{\T^2}\varphi_*=a_*\varphi_*.
        \end{align}
        Since~$\varphi_* =\proj_{m,\zeta}^\perp\varphi_*\perp \Eigen(\pD_{\T^2};\sqrt{m^2+\zeta^2})$ while~$\dist(a_*, \Spect(\pD_{\T^2})\setminus\braces{\sqrt{m^2+\zeta^2}})>0$, we must have~$\varphi_*=0$.
        This in turn implies that
        \begin{align}
            \|\proj_{m,\zeta}^\perp \varphi_k\|_{W^{1,\frac{4}{3}}} \to 0
        \end{align}
        as~$k\to+\infty$, contradicting~$\|\proj_{m,\zeta}^\perp\varphi_k\|_{L^4}=1$ for any~$k$.
\end{proof}

Another explanation of this resolvent estimate will be given in Remark~\ref{rmk:resolvent-estimate}, using Fourier modes, and we can make the constant~$C$ more explicit.

\section{The variational framework}\label{sect:variational framework}

In this section we build up the working space~$H^{\frac{1}{2}}(\T^2;\C^4)$ and realize the equation~\eqref{eq:NDE-T2} as the Euler-Lagrange equation of a~$C^1$ functional, so that we can obtain nontrivial solutions via minimax method in the sequel. 

As explained above, we consider the bundle~$\Sigma\T^2\oplus\Sigma\T^2\to\T^2$, and work with sections of regularity~$H^{1/2}$.  
This can be defined using the operator~$\pD_{\T^2}$, the resulting spaces agree with the usual fractional Sobolev spaces, see e.g.~\cite{Ammann2003Habil, SireXu2023variational} for more details. 
The construction is analogous to that in~\cite{WZ2026Stationary}; we include the details needed below.

To the self-adjoint elliptic operator~$\pD_{\T^2}$ we associate the spectral measure
\begin{align}
    \tau\colon \mathcal{B}(\R)\to \proj(L^2(\T^2;\C^4))
\end{align}
Then we have 
\begin{align}
    \pD_{\T^2} = \int_{\R} \lambda \dd\tau(\lambda)
\end{align}
and the absolute value operator of~$\pD_{\T^2}$ is given formally by 
\begin{align}
    |\pD_{\T^2}|=\int_\R |\lambda| \dd\tau(\lambda).
\end{align}
For any~$s\geq 0$, we can define the fractional powers~$|\pD_{\T^2}|^s$ by 
\begin{align}
    |\pD_{\T^2}|^s = \int_{\R} |\lambda|^s \dd\tau(\lambda)
\end{align}
whose domain is the space
\begin{align}
    \Dom(|\pD_{\T^2}|^s)
    =&\braces{ \varphi\in L^2(\T^2,\C^4) \mid \int_\R |\lambda|^{2s} \dd\Abracket{\tau(\lambda)\varphi,\varphi } <+\infty}  \\
    =&H^{s}(\mathbb{T}^2;\C^4) 
    =H^s(\T^2;\Sigma\T^2\oplus\Sigma\T^2).
\end{align}
The space~$H^{-s}(\T^2,\C^4)$ denotes the dual space of~$H^{s}(\T^2,\C^4)$ as usual. 

\ 

In our situation we need to consider~$s=\frac{1}{2}$, which is the most natural space to deal with Dirac equations. 
The working space is 
\begin{align}
    H^{\frac{1}{2}}(\T^2,\C^4)
    =\braces{\varphi\in L^2(\T^2,\C^4) \; \mid \; \|\varphi\|_{H^{\frac{1}{2}}} \equiv \parenthesis{ \|\varphi\|_{L^2}^2 + \| |\pD_{\T^2}|^{\frac{1}{2}} \varphi \|_{L^2}^2 }^{\frac{1}{2}} <+\infty  }. 
\end{align}
Since~$m>0$, the following norm 
\begin{align}
    \tnorm{\varphi}_{H^{\frac{1}{2}}(\T^2,\C^4)}\coloneqq \| |\pD_{\T^2}|^{\frac{1}{2}}\varphi\|_{L^2(\T^2,\C^4)}
\end{align}
is equivalent to the standard norm~$\|\cdot\|_{H^{\frac{1}{2}}}$. 

As usual we decompose the Hilbert space~$H^{\frac{1}{2}}$ as the direct sum of the subspaces spanned by eigenspinors of positive respectively negative eigenvalues
\begin{align}
    H^{\frac{1}{2}}(\mathbb{T}^2,\C^4)= H^{\frac{1}{2},+}\oplus H^{\frac{1}{2},-}, 
\end{align}
with 
\begin{align}
    H^{\frac{1}{2},+}\coloneqq \overline{ \bigoplus_{k>0} \Eigen(\pD_{\T^2};\mu_k) }^{H^{\frac{1}{2}}}, \quad 
    H^{\frac{1}{2},-}\coloneqq \overline{ \bigoplus_{k<0} \Eigen(\pD_{\T^2};\mu_k) }^{H^{\frac{1}{2}}} . 
\end{align}
The orthogonal projections onto these subspaces are denoted by~$P^{\pm}$. 
With these notations, we have 
\begin{align}
    \varphi=P^+\varphi + P^-\varphi \equiv \varphi^+ + \varphi^- \in H^{\frac{1}{2},+}\oplus H^{\frac{1}{2},-}=H^{\frac{1}{2}}, 
\end{align}
\begin{align}
    \pD_{\T^2}=&\pD_{\T^2}(P^+ + P^-) = \pD_{\T^2} P^+ + \pD_{\T^2} P^-,  \\
    |\pD_{\T^2}|=& \pD_{\T^2} P^+ - \pD_{\T^2} P^-. 
\end{align}
Note that~$\pD_{\T^2}$ is a first-order elliptic operator, and for a spinor~$\varphi\in H^{\frac{1}{2}}(\T^2,\C^4)$, 
\begin{align}
    \pD_{\T^2}\varphi = \pD_{\T^2}P^+\varphi + \pD_{\T^2}P^-\varphi 
    =|\pD_{\T^2}|P^+\varphi - |\pD_{\T^2}|P^-\varphi 
\end{align}
is only an element in~$H^{-\frac{1}{2}}$.
Thus~$\Abracket{\pD_{\T^2}\varphi,\varphi}$ cannot be understood as a~$L^2$ function. 
However, as usually done in analysis, we can make sense of it via the duality pairing~$\Abracket{\pD_{\T^2}\varphi,\varphi}_{H^{-1/2}\times H^{1/2}}\in \R$, and write 
\begin{align}
    \int_{\T^2}\Abracket{\pD_{\T^2}\varphi,\varphi}\dv_{\T^2}
    \coloneqq 
    \Abracket{\pD_{\T^2}\varphi,\varphi}_{H^{-1/2}\times H^{1/2}}\in \R. 
\end{align}
In terms of the above decompositions, we have 
\begin{align}
    \int_{\T^2}\Abracket{\pD_{\T^2}\varphi,\varphi}\dv_{\T^2}
    \coloneqq & 
    \Abracket{|\pD_{\T^2}|P^+\varphi - |\pD_{\T^2}|P^-\varphi ,\varphi}_{H^{-1/2}\times H^{1/2}} \\
    =&\| |\pD_{\T^2}|^{\frac{1}{2}}P^+\varphi\|_{L^2}^2 
     - \| |\pD_{\T^2}|^{\frac{1}{2}}P^-\varphi\|_{L^2}^2
\end{align}
where the last expression is well-defined variationally.  
In the same way, we have 
\begin{align}
    \tnorm{\varphi}_{H^{\frac{1}{2}}}^2 = \| |\pD_{\T^2}|^{\frac{1}{2}}\varphi \|_{L^2}^2 
    =&\| |\pD_{\T^2}|^{\frac{1}{2}}P^+\varphi\|_{L^2}^2 
     + \| |\pD_{\T^2}|^{\frac{1}{2}}P^-\varphi\|_{L^2}^2 \\
     =& \Abracket{|\pD_{\T^2}|P^+\varphi +|\pD_{\T^2}|P^-\varphi ,\varphi}_{H^{-1/2}\times H^{1/2}} \\
     \equiv & \int_{\T^2}\Abracket{|\pD_{\T^2}|\varphi,\varphi}\dv_{\T^2}
\end{align}
where the last integral is again understood in the sense of the duality pairing.

\ 

Equivalently, since~$\Spect(\pD_{\T^2})$ has only point spectrum consisting of eigenvalues of finite multiplicity, one could use the Fourier expansions via eigenspinors and define~$|\pD_{\T^2}|^s$ and its domain as follows. 
List the eigenvalues of~$\pD_{\T^2}$ in an increasing order as 
\begin{align}
    -\infty \leftarrow \cdots \leq 
    \mu_{-k}\leq \cdots \leq \mu_{-1} <0 < \mu_1 \leq \cdots \leq \mu_k \leq \cdots \to +\infty
\end{align}
and let~$\braces{ \chi_k\mid k \in\mathbb{Z}_*}$ (where~$\mathbb{Z}_*$ denotes the set of nonzero integers) be a~$L^2$-orthonormal basis consisting of eigenspinors. 
Then any~$L^2$-section~$\varphi$ of the bundle~$\Sigma\T^2 \oplus \Sigma\T^2$ can be expanded in Fourier modes as 
\begin{align}
    \varphi=\sum_{k\in \mathbb{Z}_*} a_k \chi_k. 
\end{align}
Then~$\pD_{\T^2}$ acts on~$\varphi$ as
\begin{align}
    \pD_{\T^2}\varphi=\sum_{k\in\mathbb{Z}_*} \mu_k a_k \chi_k.
\end{align}
For any~$s\in\R$, we define~$H^s(\T^2,\C^4)$ by the Fourier expansion
\begin{align}
    H^s(\T^2,\C^4)
    =
    \braces{
        \varphi=\sum_{k\in\mathbb Z_*}a_k\chi_k
        \;\middle|\;
        \sum_{k\in\mathbb Z_*}|\mu_k|^{2s}|a_k|^2<+\infty
    },
\end{align}
where for~$s<0$ the space is understood as the completion of finite
Fourier sums with respect to the corresponding norm.
We define
\begin{align}
    |\pD_{\T^2}|^s\varphi
    =
    \sum_{k\in\mathbb Z_*}|\mu_k|^s a_k\chi_k,
\end{align}
with domain exactly~$H^{s}(\T^2,\C^4)$. Then
\begin{align}
    \tnorm{\varphi}_{H^s}
    =
    \||\pD_{\T^2}|^s\varphi\|_{L^2}
    =
    \parenthesis{
        \sum_{k\in\mathbb Z_*}|\mu_k|^{2s}|a_k|^2
    }^{\frac12},
\end{align}
for every~$s\in\R$.
In this way we can get a clearer picture of the fractional Sobolev spaces on~$\T^2$.
For~$s=\frac{1}{2}$, the subspace~$H^{\frac{1}{2},+}$ then consists of 
\begin{align}
    \varphi=\sum_{k>0} a_k \chi_k, & & \mbox{ with } & & \sum_{k>0}\mu_k |a_k|^2<+\infty,
\end{align}
and an analogous description for~$H^{\frac{1}{2},-}$ holds.

\begin{rmk}\label{rmk:resolvent-estimate}
    We can now give another explanation of Lemma~\ref{lemma:invertibility-orthogonal}. 
    Using the Sobolev embeddings
    \begin{align}
    H^{\frac{1}{2}}(\T^2)\hookrightarrow L^4(\T^2,\C^4), & & L^{\frac{4}{3}}(\T^2,\C^4)\hookrightarrow H^{-\frac{1}{2}}(\T^2,\C^4),
    \end{align}
    it suffices to find a constant~$C>0$ such that
    \begin{align}
    \|\proj_{m,\zeta}^\perp \varphi\|_{H^{\frac{1}{2}}}\leq C\|\proj_{m,\zeta}^\perp(\pD_{\T^2}-a)\varphi\|_{H^{-\frac{1}{2}}}.
    \end{align}
    Recall that~$\proj_{m,\zeta}^\perp$ commutes with~$(\pD_{\T^2}-a)$.
    We thus restrict to spinors in the orthogonal complement of~$\Eigen(\pD_{\T^2};\sqrt{\zeta^2+m^2})$ and let 
    \begin{align}
    \varphi=\sum_{k\colon \mu_k\neq\sqrt{\zeta^2+m^2}}a_k \chi_k.
    \end{align}
    Then we have 
    \begin{align}
        \tnorm{\proj_{m,\zeta}^\perp(\pD_{\T^2}-a)\varphi }_{H^{-\frac{1}{2}}}^2
        =&\tnorm{ \sum_{k\colon \mu_k\neq\sqrt{\zeta^2+m^2}} a_k(\mu_k-a) \chi_k }_{H^{-\frac{1}{2}}}^2 \\
        =&\sum_{k\colon \mu_k\neq\sqrt{\zeta^2+m^2}}|\mu_k|^{-1}|(\mu_k-a)a_k|^2 \\
        =&\sum_{k\colon\mu_k\neq\sqrt{\zeta^2+m^2}} \left|\frac{\mu_k-a}{\mu_k}\right|^2 |\mu_k| |a_k|^2 \\
        \geq& \parenthesis{ \inf_{k\colon \mu_k\neq\sqrt{\zeta^2+m^2}} \left|\frac{\mu_k-a}{\mu_k}\right|^2  } \sum_{k\colon\mu_k\neq \sqrt{\zeta^2+m^2}} |\mu_k| |a_k|^2\\
        \geq& c_0^2 \tnorm{\proj_{m,\zeta}^\perp \varphi }_{H^{\frac{1}{2}}}^2.
    \end{align}
    In the last inequality, 
    \begin{align}
        0<c_0=1-\frac{\sqrt{\zeta^2+m^2}}{\sqrt{\zeta^2+m^2+\lambda_1^2}} 
        \leq 1-\frac{a}{\sqrt{\zeta^2+m^2+\lambda_1^2}}
        \leq \inf_{k\colon \mu_k\neq\sqrt{\zeta^2+m^2}} \left|\frac{\mu_k-a}{\mu_k}\right| 
    \end{align}
    for all~$a\in [0,\sqrt{\zeta^2+m^2}]$, where~$\lambda_1$ is the smallest positive eigenvalue of~$\pd_{\T^2}$ in~\eqref{eq:spectrum-pd}.
    This makes clear that the constant in Lemma~\ref{lemma:invertibility-orthogonal} is essentially~$c_0^{-1}$, multiplied by embedding constants.
\end{rmk}

\

With~$\zeta\in\bZ\frac{2\pi}{\ell_3}$ fixed, we are ready to consider the functional~$J_{\zeta}\colon H^{\frac{1}{2}}(\mathbb{T}^2;\C^4)\to \R$, defined by 
\begin{align}\label{eq:functional-T2}
    J_\zeta (\varphi)
    =&\int_{\T^2} \frac{1}{2}\Abracket{\pD_{\T^2} \varphi,\varphi }
     -\frac{a}{2}|\varphi|^2 -F(\varphi)\dv_{\T^2}
    \\
    =&\frac{1}{2}\parenthesis{ \tnorm{P^+\varphi}_{H^{1/2}}^2 
     - \tnorm{P^-\varphi }_{H^{1/2}}^2 
     -a\|\varphi\|_{L^2}^2 }
     -\int_{\T^2} F(\varphi)\dv_{\T^2}.
\end{align}
The Euler--Lagrange equation of~$J_\zeta$ is precisely given by~\eqref{eq:NDE-T2}.
We therefore seek nonzero critical points of~$J_{\zeta}$. 
However, the functional~$J_\zeta$ is strongly indefinite, and under the structural assumptions on the Soler-type nonlinearity considered here, the compactness required by the standard Palais--Smale or Cerami framework is not directly available. 
We therefore follow the perturbation strategy used in~\cite{WZ2026Stationary}: we first add a higher-order perturbation term, obtain critical points for the perturbed functionals, derive estimates that are uniform with respect to the perturbation parameter, and finally pass to the zero-perturbation limit.
Moreover, since~$\dim (\mathbb{T}^2)=2$ and~$H^{\frac{1}{2}}(\T^2)\hookrightarrow L^4(\T^2)$, we expect to deal with nonlinearities of subquartic growth.

More precisely, we consider 
\begin{align}\label{eq:perturbed-functional}
    J_{\zeta,\eps}(\varphi)
    =J_\zeta(\varphi)- \eps\int_{\T^2} |\varphi|^{\alpha_2}\dv_{\T^2},
\end{align}
where~$\eps\in[0,1]$ is the perturbation parameter. 
The Euler--Lagrange equation for~$J_{\zeta,\eps}$ is 
\begin{align}\label{eq:perturbed-NDE}
    \pD_{\T^2}\varphi-a\varphi - \p F(\varphi)= \eps \alpha_2 |\varphi|^{\alpha_2 -2 }\varphi, \quad \mbox{ on } \T^2. 
\end{align}
It reduces to~\eqref{eq:NDE-T2} when~$\eps= 0$. 
For later convenience we abbreviate~$F(\varphi)+\eps|\varphi|^{\alpha_2}$ as~$F_\eps(\varphi)$, so that~\eqref{eq:perturbed-NDE} can be rewritten as 
\begin{align}
    \pD_{\T^2}\varphi = a\varphi + \p F_\eps(\varphi).
\end{align}
By enlarging the constants~$A_j$ if necessary, we may assume that the perturbed nonlinearities~$F_\eps$ satisfy (F1-4) uniformly, for any~$\eps\in [0,1]$. 
Since~$\alpha_2\geq\alpha_1>2$, they also satisfy 
\begin{align}
    \p_\varphi F_\eps(x,\varphi)[\varphi] \geq \alpha_1 F_\eps(x,\varphi).
\end{align}

We make a notational remark here: formally we should write the functional as~$J_{\zeta,\eps,a}$ since at the end of Section~\ref{sect:perturbed solutions} we will vary~$a$ in a suitable range. 
But before that, the temporal frequency is fixed.
Thus we decide to use the notation~$J_{\zeta,\eps}$ and~$J_{\zeta,\eps, a}$ interchangeably, with the latter preferred only when we need to emphasize the dependence on~$a$ and with the former adopted for simplicity of notation. 
This should cause no confusion.

\section{Minimax solutions to the perturbed equations}\label{sect:perturbed solutions}

In this section we consider the perturbed functionals~$J_{\zeta,\eps}$ with~$\zeta\in\bZ\frac{2\pi}{\ell_3}$ and~$\eps\in (0,1]$, and obtain nonzero critical points of~$J_{\zeta,\eps}$ via a minimax method.

We start with the verification of Palais--Smale condition for the perturbed functionals.
The argument goes in the same way as in~\cite{WZ2026Stationary} and we include it for completeness. 
Moreover, we observe that the possible critical levels of~$J_{\zeta,\eps}$ with~$\eps>0$ are necessarily nonnegative. 

\begin{prop}\label{prop:PS}
    Fix~$\eps\in (0,1]$ and let~$(\varphi_n)$ be a~$(PS)_c$ sequence for~$J_{\zeta,\eps}$.
    \begin{itemize}
        \item[(i)] The sequence~$(\varphi_n)$ is bounded in~$H^{\frac{1}{2}}$.
        \item[(ii)] There is a convergent subsequence of~$(\varphi_n)$ whose limit is a solution of~\eqref{eq:perturbed-NDE}.
        \item[(iii)] $c\geq 0$. Moreover, if~$c=0$, then the limit in (ii) is necessarily trivial.
    \end{itemize}
\end{prop}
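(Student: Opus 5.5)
The proof follows the standard Esteban--Séré scheme, using the coercive term $\eps|\varphi|^{\alpha_2}$ and the Ambrosetti--Rabinowitz-type inequality for $F_\eps$.

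\emph{Step (i), boundedness.} Combine the two pieces of $(PS)_c$ information in the usual way:
\begin{align}
    J_{\zeta,\eps}(\varphi_n)-\tfrac12 J_{\zeta,\eps}'(\varphi_n)[\varphi_n]
    =\int_{\T^2}\tfrac12\p F_\eps(\varphi_n)[\varphi_n]-F_\eps(\varphi_n)\dv_{\T^2}
    \geq \parenthesis{\tfrac{\alpha_1}{2}-1}\int_{\T^2}F_\eps(\varphi_n)\dv_{\T^2}.
\end{align}
Since $F_\eps\geq\eps|\varphi|^{\alpha_2}$, this gives
\[
\eps\|\varphi_n\|_{L^{\alpha_2}}^{\alpha_2}\leq C(1+c)+o(1)\tnorm{\varphi_n}_{H^{1/2}}.
\]
Next test $J_{\zeta,\eps}'(\varphi_n)$ against $\varphi_n^+-\varphi_n^-$. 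This yields
\[
\tnorm{\varphi_n}^2\leq a\|\varphi_n\|_{L^2}\|\varphi_n^+-\varphi_n^-\|_{L^2}+\int_{\T^2}|\p F_\eps(\varphi_n)||\varphi_n^+-\varphi_n^-|+o(1)\tnorm{\varphi_n}.
\]
The $a$-term is not small in general. However, $a<\sqrt{\zeta^2+m^2}=\mu_1$ gives $a\|\varphi\|_{L^2}^2\le \frac{a}{\mu_1}\tnorm{\varphi}^2$. Since $|\p F_\eps(\varphi)|\leq C(1+|\varphi|^{\alpha_2-1})$, H\"older and the Sobolev embedding give
\[
\int_{\T^2}|\p F_\eps(\varphi_n)||\varphi_n^\pm|\leq C\parenthesis{1+\|\varphi_n\|_{L^{\alpha_2}}^{\alpha_2-1}}\|\varphi_n^\pm\|_{L^{\alpha_2}}\leq C\parenthesis{1+\|\varphi_n\|_{L^{\alpha_2}}^{\alpha_2-1}}\tnorm{\varphi_n}.
\]
Together with the previous bound this gives
\[
(1-\tfrac{a}{\mu_1})\tnorm{\varphi_n}^2\leq C_\eps\parenthesis{1+\tnorm{\varphi_n}^{(\alpha_2-1)/\alpha_2}}\tnorm{\varphi_n}.
\]
The exponent $1+(\alpha_2-1)/\alpha_2$ is less than $2$, so $(\varphi_n)$ is bounded. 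If $a$ is not assumed below $\mu_1$, I would instead absorb $a\|\varphi_n\|_{L^2}^2\leq a|\T^2|^{1-2/\alpha_2}\|\varphi_n\|_{L^{\alpha_2}}^2$, which is controlled by the same $L^{\alpha_2}$ bound. This works for any $a$, so I would adopt it as the main argument. I expect this step to be the main obstacle: the constant depends badly on $\eps$, which is acceptable here since $\eps$ is fixed.

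\emph{Step (ii), compactness.} Since $\alpha_2<4$, the embedding $H^{1/2}\hookrightarrow L^{\alpha_2}$ is compact. Pass to a subsequence with $\varphi_n\rightharpoonup\varphi$ in $H^{1/2}$ and $\varphi_n\to\varphi$ in $L^2\cap L^{\alpha_2}$. Then $\p F_\eps(\varphi_n)\to\p F_\eps(\varphi)$ in $L^{\alpha_2/(\alpha_2-1)}\hookrightarrow H^{-1/2}$, by the growth bound and continuity of Nemytskii operators. From $|\pD_{\T^2}|(\varphi_n^+-\varphi_n^-)=a\varphi_n+\p F_\eps(\varphi_n)+o(1)$ in $H^{-1/2}$, and since $|\pD_{\T^2}|\colon H^{1/2}\to H^{-1/2}$ is an isometric isomorphism, we get strong convergence $\varphi_n\to\varphi$ in $H^{1/2}$. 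Passing to the limit in $J'_{\zeta,\eps}(\varphi_n)\to0$ shows that $\varphi$ solves \eqref{eq:perturbed-NDE}.

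\emph{Step (iii), sign of the level.} By the strong convergence, $J_{\zeta,\eps}(\varphi)=c$ and $J'_{\zeta,\eps}(\varphi)=0$. Applying the identity of step (i) to $\varphi$ gives
\[
c=\int_{\T^2}\tfrac12\p F_\eps(\varphi)[\varphi]-F_\eps(\varphi)\dv_{\T^2}\geq(\tfrac{\alpha_1}{2}-1)\int_{\T^2}F_\eps(\varphi)\dv_{\T^2}\geq0.
\]
If $c=0$, this forces
\[
\eps\int_{\T^2}|\varphi|^{\alpha_2}\dv_{\T^2}\leq\int_{\T^2}F_\eps(\varphi)\dv_{\T^2}=0,
\]
and since $\eps>0$, we conclude $\varphi=0$.
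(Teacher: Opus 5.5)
Your proof is correct and follows the paper's argument. The Ambrosetti--Rabinowitz identity gives an $\eps$-dependent $L^{\alpha_2}$ bound, and the $H^{1/2}$ norm is then controlled through the linear part. Your testing against $\varphi_n^+-\varphi_n^-$ is just the dual form of the paper's estimate $\|\varphi_n\|_{H^{1/2}}\le C\|\pD_{\T^2}\varphi_n\|_{H^{-1/2}}$. As in the paper, compactness of $H^{1/2}\hookrightarrow L^{\alpha_2}$ plus invertibility gives (ii).

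You are right to take the $L^{\alpha_2}$-absorption of the $a$-term as your main argument, since the proposition places no restriction $a<\sqrt{\zeta^2+m^2}$. Deriving (iii) from the strong limit rather than along the sequence, as the paper does, is equivalent.
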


\begin{proof}
    That~$(\varphi_n)$ is a~$(PS)_c$ sequence means
    \begin{align}\label{eq:PS level}
        J_{\zeta,\eps}(\varphi_n)
        =J_\zeta(\varphi_n)- \eps\int_{\T^2} |\varphi_n|^{\alpha_2}\dv_{\T^2} \to c,
    \end{align}
    and
    \begin{align}\label{eq:PS differential}
        \dd J_{\zeta,\eps}(\varphi_n)=\pD_{\T^2}\varphi_n-a\varphi_n-\partial F_\eps(\varphi_n)\; \to \; 0
         \quad \mbox{ in } H^{-\frac{1}{2}},
    \end{align}
    as~$n\to+\infty$.

    \

    (i) Recall that~$\partial F_\eps(\varphi_n): = \partial F(\varphi_n) + \eps\alpha_2|\varphi_n|^{\alpha_2-2}\varphi_n$. 
    We test~\eqref{eq:PS differential} against~$\varphi_n$ and then subtract it from the twice of~\eqref{eq:PS level}:
    \begin{align}\label{eq: the energy functional minus the F-derivative}
    2J_{\zeta,\eps}(\varphi_n)-\dd J_{\zeta,\eps}(\varphi_n)[\varphi_n]
    =&\int_{\T^2}  \partial F(\varphi_n)[\varphi_n]-2F(\varphi_n)+\eps (\alpha_2-2)|\varphi_n|^{\alpha_2}\dv_{\T^2} \\
    \geq & \eps(\alpha_2-2)\int_{\T^2}  |\varphi_n|^{\alpha_2}\dv_{\T^2},
    \end{align}
    where (F3) is used in the last inequality. Thus
    \begin{align}
    \int_{\T^2}  |\varphi_n|^{\alpha_2}\dv_{\T^2} \leq
    \frac{1}{\eps(\alpha_2-2)} \parenthesis{ 2c + o(1) + o(\|\varphi_n\|_{H^{\frac12}})}.
    \end{align}
    Observe that in the equation
    \begin{align}
    \pD_{\T^2} \varphi_n = a\varphi_n + \partial F(\varphi_n) + \eps \alpha_2 |\varphi_n|^{\alpha_2-2}\varphi_n +\dd J_{\zeta,\eps}(\varphi_n),
    \end{align}
    the right-hand side can be controlled by
    \begin{align}
    \| RHS\|_{H^{-\frac{1}{2}}}
    \leq & a \|\varphi_n\|_{H^{-\frac{1}{2}}} +C(A_2,\alpha_2)( 1+\||\varphi_n|^{\alpha_2-1}\|_{H^{-\frac{1}{2}}}) + o(1) \\
    \leq & C(a,\zeta, A_2, \eps, \alpha_2, \vol(\T^2))(1+ \|\varphi_n\|_{L^{\alpha_2}}^{\alpha_2-1})   + o(1) \\
    \leq&  C(a,\zeta, A_2, \eps, \alpha_2, \vol(\T^2)) \parenthesis{1+\frac{1}{\eps(\alpha_2-2)} \parenthesis{ 2c + o(1) + o(\|\varphi_n\|_{H^{\frac12}})}}^{\frac{\alpha_2-1}{\alpha_2}}.
    \end{align}
    Using the invertibility of~$\pD_{\T^2}$ we see that 
    \begin{align}
    \|\varphi_n\|_{H^{\frac{1}{2}}} \leq C \|\pD_{\T^2}\varphi_n\|_{H^{-\frac{1}{2}}}\leq C(c,a,\zeta, A_2, \eps,\alpha_2,\vol(\T^2)).
    \end{align}

    \

    (ii) By Banach--Alaoglu theorem, passing to a subsequence if necessary, we may assume that~$\varphi_n$ converges  to a limit~$\varphi_\eps\in H^{\frac{1}{2}}$ \emph{weakly} in~$H^{\frac{1}{2}}$ and strongly in~$L^q$ for any~$q<4$.
    In particular,~$\varphi_n$ converges strongly to~$\varphi_\eps$ in~$L^{\alpha_2}$ since~$\alpha_2 <4$. 
    We next show that~$\varphi_n\to\varphi_\eps$ strongly in~$H^{\frac{1}{2}}$ and~$\varphi_\eps$ is a critical point of~$J_{\zeta,\eps}$.

    The continuity assumption (F2) and growth assumption~\eqref{eq:F2-all} then imply 
    \begin{align}
        \p_\varphi F(\cdot, \varphi_n) \to \p_\varphi F(\cdot,\varphi_\eps), & & 
        |\varphi_n|^{\alpha_2-2}\varphi_n \to |\varphi_\eps|^{\alpha_2-2} \varphi_\eps, & & 
        \mbox{ in  } L^{\frac{\alpha_2}{\alpha_2-1}}.
    \end{align}
    Note that~$\frac{\alpha_2}{\alpha_2-1}>\frac{4}{3}$ and~$L^{\frac{4}{3}}(\T^2,\C^4) \hookrightarrow H^{-\frac{1}{2}}(\T^2,\C^4)$.
    Thus we may pass to the limit in~\eqref{eq:PS differential} and obtain that~$\dd J_{\zeta,\eps}(\varphi_\eps)=0$.

    The difference~$\varphi_n-\varphi_\eps$ satisfies
    \begin{align}
        \pD_{\T^2}(\varphi_n-\varphi_\eps)
        =a(\varphi_n-\varphi_\eps)+\partial F(\varphi_n)-\partial F(\varphi_\eps) + \eps \alpha_2 \parenthesis{|\varphi_n|^{\alpha_2-2} \varphi_n - |\varphi_\eps|^{\alpha_2-2}\varphi_\eps }  + \dd J_{\zeta,\eps}(\varphi_n)
    \end{align}
    and the right-hand side above converges to~$0$ in~$H^{-\frac{1}{2}}$. 
    Again by the invertibility of~$\pD_{\T^2}$ we get 
    \begin{align}
        \|\varphi_n-\varphi_\eps\|_{H^{\frac{1}{2}}} \leq C\|\pD_{\T^2}(\varphi_n-\varphi_\eps)\|_{H^{-\frac{1}{2}}} \to 0.
    \end{align}
    Thus~$\varphi_n \to \varphi_\eps$ is actually strong in~$H^{\frac{1}{2}}$. 

    Therefore, the perturbed functional~$J_{\zeta,\eps}$ satisfies the Palais--Smale condition as long as~$\eps\in (0,1]$.

    \

    (iii) Now we have seen that the sequence~$(\varphi_n)$ is bounded in~$H^{\frac{1}{2}}$ and converges in~$H^{\frac{1}{2}}$ to~$\varphi_\eps$. 
    Thus
    \begin{align}
        c=&\lim_{n\to+\infty} J_{\zeta,\eps}(\varphi_n) - \frac{1}{2}\dd J_{\zeta,\eps}(\varphi_n)[\varphi_n] \\
        =&\lim_{n\to+\infty} \int_{\T^2} -F(\varphi_n)+\frac{1}{2}\partial F(\varphi_n)[\varphi_n]\dv_{\T^2}
           +\eps\int_{\T^2} -|\varphi_n|^{\alpha_2} +\frac{1}{2}\alpha_2|\varphi_n|^{\alpha_2}\dv_{\T^2} \\
        \geq & \lim_{n\to+\infty}\frac{1}{2}\eps(\alpha_2-2)\int_{\T^2} |\varphi_n|^{\alpha_2}\dv_{\T^2} \geq 0,
    \end{align}
    because of (F3) and~$\alpha_2>2$.
    Hence all possible critical levels of~$J_{\zeta,\eps}$ are nonnegative.
    Moreover, if~$c=0$, then~$\|\varphi_n\|_{L^{\alpha_2}}\to 0$ and the strong limit is~$\varphi_\eps$. 
    This holds for any $(PS)_0$ sequence, thus the only critical point at the level~$0$ is the zero spinor. 
\end{proof}

Next we search for minimax levels of the perturbed functional.
To this aim we need to find suitable linking structure of the functional~$J_{\zeta,\eps}$.
Although the functional is strongly indefinite, the explicit spectral description of~$\pD_{\T^2}$ provides us with local linking geometry.

Consider the eigenspace of the lowest positive eigenvalue
\begin{align}
        \mathscr{P}= \Span_{\C}\braces{ \begin{pmatrix} 1 \\ 0 \\\frac{\zeta}{ m+\sqrt{\zeta^2+m^2} } \\ 0  \end{pmatrix},
        \begin{pmatrix} 0 \\ 1 \\ 0 \\ \frac{-\zeta}{m+\sqrt{\zeta^2+m^2}} \end{pmatrix} },
\end{align}
which is a vector space of complex dimension two.
It consists of parallel spinors on~$\T^2$, i.e. the component functions are constant.
But unlike the corresponding spinors in~\cite{WZ2026Stationary}, these vectors do not lie in the (+1)-eigenspace of~$\gamma^0$, which is crucial in the construction of links there.
However, we can still see that the upper components occupy at least a fixed portion of the norms of spinors, which is enough for our application.

Given~$\zeta\in\bZ\frac{2\pi}{\ell_3}$, we will use the notation
\begin{align}
    \theta\equiv \theta(\zeta,m)=1-\frac{m^2}{36(\zeta^2+m^2)}\in [\frac{35}{36},1)
\end{align}
throughout this section. 

\begin{lemma}\label{lemma:local control}
    For any~$\mu \in  (\theta\sqrt{\zeta^2+m^2},\sqrt{\zeta^2+m^2})$, and for any~$k\in\braces{1,2,3,4}$, there exists a real~$k$-dimensional space~$\mathscr{E}_{k}\subset H^{\frac{1}{2},+}$ and a positive number~$R>0$ such that for any~$\varphi\in H^{\frac{1}{2}}$, and for any~$\eps\in [0,1]$,
    \begin{multline}
        \parenthesis{ \tnorm{P^-\varphi}_{H^{\frac{1}{2}}} \leq R, \; \mbox{ and }\;  P^+\varphi\in\mathscr{E}_{k}, \, \tnorm{P^+\varphi }_{H^{\frac{1}{2}}}=R }  \\
        \quad \Longrightarrow  \quad
        \int_{\T^2} \frac{1}{2}\Abracket{\pD_{\T^2}\varphi,\varphi}-F_\eps(\varphi) \dv_{\T^2} \leq \int_{\T^2} \frac{\mu}{2}|\varphi|^2 \dv_{\T^2}.
    \end{multline}
\end{lemma}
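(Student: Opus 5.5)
The plan is to take $\mathscr{E}_k$ to be an arbitrary real $k$-dimensional subspace of the lowest positive eigenspace $\mathscr{P}=\Eigen(\pD_{\T^2};\sqrt{\zeta^2+m^2})$. This space has complex dimension two, hence real dimension four, which is why $k\leq 4$. We then choose $R$ large, depending only on $m,\zeta,\mu,\nu,A_3,A_4,\vol(\T^2)$. Since $F_\eps\geq F$, it suffices to treat $\eps=0$, and the bound is then automatically uniform in $\eps\in[0,1]$.

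Write $s=\sqrt{\zeta^2+m^2}$, $V=\vol(\T^2)$, $e=P^+\varphi\in\mathscr{E}_k$ (a constant spinor) and $w=P^-\varphi$. By $L^2$-orthogonality of $e$ and $w$,
\begin{align}
    \int_{\T^2}\tfrac12\Abracket{\pD_{\T^2}\varphi,\varphi}\dv_{\T^2}=\tfrac12\parenthesis{s\|e\|_{L^2}^2-\tnorm{w}_{H^{1/2}}^2},
    \qquad
    \|\varphi\|_{L^2}^2=\|e\|_{L^2}^2+\|w\|_{L^2}^2 .
\end{align}
Also $\tnorm{e}^2=s\|e\|_{L^2}^2=R^2$ and $\|w\|_{L^2}^2\leq s^{-1}\tnorm{w}^2$, because every negative eigenvalue satisfies $|\mu_k|\geq s$. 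The desired inequality is therefore equivalent to
\begin{align}
    \tfrac{s-\mu}{2}\|e\|_{L^2}^2\leq \tfrac12\tnorm{w}^2+\tfrac{\mu}{2}\|w\|_{L^2}^2+\int_{\T^2}F(\varphi)\dv_{\T^2}.
\end{align}

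\textbf{Case 1: $\tnorm{w}^2\geq (1-\mu/s)R^2$.} The term $\tfrac12\tnorm{w}^2$ alone is at least $\tfrac{s-\mu}{2s}R^2=\tfrac{s-\mu}{2}\|e\|_{L^2}^2$. Since $F\geq0$, we are done.

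\textbf{Case 2: $\tnorm{w}^2< (1-\mu/s)R^2$.} Since $\mu>\theta s$, we have $1-\mu/s<1-\theta=\frac{m^2}{36s^2}$. Hence
\begin{align}
    \|w\|_{L^2}^2< \tfrac{m^2}{36s^3}R^2=\tfrac{m^2}{36s^2}\|e\|_{L^2}^2,
    \qquad\text{so}\qquad
    \|w\|_{L^2}\leq \tfrac{m}{6s}\|e\|_{L^2}.
\end{align}
Now expand the Lorentz scalar: $\bar\varphi\varphi=\bar e e+2\mathrm{Re}\Abracket{\gamma^0e,w}+\bar w w$, where $\bar e e=\frac{m}{s}|e|^2$ by the computation preceding Lemma~\ref{lemma:null-cone-separation}. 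Integrating, and using that $e$ is constant so that $|e|\sqrt V=\|e\|_{L^2}$, Cauchy--Schwarz gives
\begin{align}
    \int_{\T^2}\bar\varphi\varphi\dv_{\T^2}\geq \tfrac{m}{s}\|e\|_{L^2}^2-2\|e\|_{L^2}\|w\|_{L^2}-\|w\|_{L^2}^2
    \geq \Big(\tfrac{m}{s}-\tfrac{m}{3s}-\tfrac{m^2}{36s^2}\Big)\|e\|_{L^2}^2\geq c_*\|e\|_{L^2}^2,
\end{align}
with $c_*=c_*(m,\zeta)>0$. This is precisely where the constant $36$ in $\theta$ is designed to work. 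By Jensen/H\"older, $\int|\bar\varphi\varphi|^\nu\geq V^{1-\nu}\big|\int\bar\varphi\varphi\big|^\nu\geq V^{1-\nu}c_*^\nu (R^2/s)^\nu$. Then (F4) yields
\begin{align}
    \int_{\T^2}F(\varphi)\dv_{\T^2}\geq A_3V^{1-\nu}c_*^\nu s^{-\nu}R^{2\nu}-A_4V.
\end{align}
Since $\nu>1$, this exceeds $\frac{s-\mu}{2s}R^2$ once $R$ is large, and the inequality follows.

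The only delicate point is Case 2: the lower bound on $\int\bar\varphi\varphi$ must survive the indefinite cross term $2\mathrm{Re}\Abracket{\gamma^0e,w}$. This is handled by the smallness of $w$ relative to $e$, which is forced by the threshold $\theta$. The growth $R^{2\nu}$ versus $R^2$ then closes the argument. The hypothesis $\tnorm{P^-\varphi}\leq R$ is not even needed in this argument, and $R$ is chosen independently of $\eps$.
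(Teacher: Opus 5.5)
Your proof is correct and follows essentially the same route as the paper: $\mathscr{E}_k$ is taken inside $\mathscr{P}$, the case split compares $\tnorm{P^-\varphi}^2$ with $(1-\mu/s)R^2$, and in the second case $\int\bar\varphi\varphi$ is bounded below via Cauchy--Schwarz on the cross term, then (F4), H\"older and $\nu>1$ finish. Your bound $\|w\|_{L^2}\leq\frac{m}{6s}\|e\|_{L^2}$ is just a repackaging of the paper's $\delta\leq\frac{m^2}{36s^2}$ computation. One small remark: the paper picks $R$ independent of $\mu$, by making $\frac12R^2-cR^{2\nu}+A_4\vol(\T^2)\leq 0$, and this is what later keeps $R$ independent of $a$; you get the same by replacing your threshold $\frac{s-\mu}{2s}R^2$ with $\frac{R^2}{2}$.
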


\begin{proof}
Let~$\mathscr{E}_k$ be a real~$k$-dimensional subspace of~$\mathscr{P}$ consisting of constant spinors.
Then for any~$e\in\mathscr{E}_k$,~$\pD_{\T^2} e=\sqrt{\zeta^2+ m^2} e$.
Moreover,
\begin{align}
    \bar{e}e = \Abracket{\gamma^0 e, e}
    =& |e_{\up}|^2 - |e_{\down}|^2
    = |e_{\up}|^2 - \parenthesis{ \frac{\zeta}{m+\sqrt{\zeta^2+m^2}} }^2 |e_{\up}|^2 \\
    =&\parenthesis{  1 - \frac{\zeta^2}{(m+\sqrt{\zeta^2+m^2})^2} } |e_{\up}|^2
    = \frac{2 m}{m+\sqrt{\zeta^2+m^2}} |e_{\up}|^2,
\end{align}
\begin{align}
    |e|^2=|e_{\up}|^2 +|e_{\down}|^2
    =\parenthesis{  1 + \frac{\zeta^2}{(m+\sqrt{\zeta^2+m^2})^2} } |e_{\up}|^2
    =\frac{2\sqrt{\zeta^2+m^2}}{m+\sqrt{\zeta^2+m^2}}|e_{\up}|^2.
\end{align}
Thus
\begin{align}
    \bar{e}e = \frac{m}{\sqrt{\zeta^2 + m^2}} |e|^2.
\end{align}

Then for~$e=P^+\varphi\in\mathscr{E}_k$ with~$\tnorm{P^+\varphi}_{H^{1/2}} = \tnorm{e}_{H^{1/2}}=R$, we have
\begin{align}
    R^2
    =\int_{\T^2} \Abracket{P^+e,\pD_{\T^2} P^+ e}\dv_{\T^2} = \sqrt{\zeta^2+m^2} \int_{\T^2} |e|^2\dv_{\T^2}
\end{align}
and in particular
\begin{align}
    \int_{\T^2} \frac{1}{2}\Abracket{e,\pD_{\T^2} e}\dv_{\T^2}
    = \int_{\T^2} \frac{\sqrt{\zeta^2+m^2} }{2} |e|^2 \dv_{\T^2}
    = \frac{R^2}{2},
\end{align}
\begin{align}
    \int_{\T^2} \bar{e}e\dv_{\T^2} = \frac{m}{\sqrt{\zeta^2 + m^2}} \int_{\T^2} |e|^2 \dv_{\T^2}
    = \frac{m}{{\zeta^2 + m^2}}\cdot R^2.
\end{align}
For a general~$\varphi=P^-\varphi+e\in H^{\frac{1}{2},-}\oplus\mathscr{E}_k$ with~$\tnorm{P^+\varphi}_{H^{1/2}}=\tnorm{e}_{H^{1/2}}=R$ and~$\tnorm{P^-\varphi}_{H^{1/2}}\leq R$, we consider the following two cases.
\begin{itemize}
    \item If~$-\int_{\T^2}\Abracket{P^-\varphi,\pD_{\T^2} P^-\varphi}\dv_{\T^2}\geq(1-\frac{\mu}{\sqrt{\zeta^2+m^2}})R^2$, then, since~$F_\eps(\varphi)\geq 0$, we have
    \begin{align}
        \int_{\T^2} \frac{1}{2}\Abracket{\pD_{\T^2}\varphi,\varphi}-F_\eps(\varphi) \dv_{\T^2}
        =& \int_{\T^2} \frac{1}{2}\Abracket{P^+\varphi,\pD_{\T^2} P^+\varphi} +\frac{1}{2} \Abracket{P^-\varphi,\pD_{\T^2} P^-\varphi} -F_\eps(\varphi) \dv_{\T^2} \\
         \leq&          \frac{1}{2}\||\pD_{\T^2}|^{\frac12}P^+\varphi\|^2_{L^2} - \frac{1}{2}\||\pD_{\T^2}|^{\frac12}P^-\varphi\|^2_{L^2} \\
        \leq & \frac{1}{2}\cdot R^2 - \frac{1}{2}\cdot(1-\frac{\mu}{\sqrt{\zeta^2+m^2}})R^2\\
        = & \frac{\mu}{2}\cdot\frac{R^2}{\sqrt{\zeta^2+m^2}}  = \frac{\mu}{2}\int_{\T^2} |P^+ \varphi|^2 \dv_{\T^2}  \\
        \leq& \frac{\mu}{2}\int_{\T^2} |\varphi|^2\dv_{\T^2}.
    \end{align}

    \item If, on the other hand, ~$-\int_{\T^2}\Abracket{P^-\varphi,\pD_{\T^2} P^-\varphi}\dv_{\T^2}\leq(1-\frac{\mu}{\sqrt{\zeta^2+m^2}})R^2$, then 
    \begin{align}
    \|P^-\varphi\|_{L^2}^2
    \leq& -\frac{1}{\sqrt{\zeta^2+m^2}} \int_{\T^2} \Abracket{P^-\varphi,\pD_{\T^2} P^-\varphi }\dv_{\T^2} \\
    \leq& \frac{1}{\sqrt{\zeta^2+m^2}}(1-\frac{\mu}{\sqrt{\zeta^2+m^2}})R^2.
    \end{align}
    The negative quadratic contribution alone is insufficient.
    We turn to the nonlinearity to control the positive contribution.
    Indeed, using (F4), we see that
    \begin{align}
    -\int_{\T^2} F_\eps(\varphi)\dv_{\T^2}
    \leq& -\int_{\T^2} A_3 |\bar{\varphi}\varphi|^\nu -A_4 \dv_{\T^2}
    =-A_3\int_{\T^2} |\bar{\varphi}\varphi|^\nu\dv_{\T^2} + A_4 \cdot\vol(\T^2).
    \end{align}
    The term involving the Lorentz scalar~$\bar{\varphi}\varphi$ can be estimated by
    \begin{align}
    &\parenthesis{\int_{\T^2} |\bar{\varphi}\varphi|^\nu\dv_{\T^2} }^{\frac{1}{\nu}}
    \parenthesis{\int_{\T^2} 1 \dv_{\T^2}}^{1-\frac{1}{\nu}}\\
    \geq&  \int_{\T^2} \bar{\varphi}\varphi\dv_{\T^2}
    = \int_{\T^2} \Abracket{\gamma^0\parenthesis{P^+\varphi+ P^-\varphi}, P^+\varphi + P^-\varphi}\dv_{\T^2} \\
    =& \int_{\T^2} \Abracket{\gamma^0 P^+ \varphi, P^+\varphi} + 2\operatorname{Re}\Abracket{ e,\gamma^0P^-\varphi}
    + \Abracket{\gamma^0 P^- \varphi, P^-\varphi}\dv_{\T^2} \\
    \geq & \frac{mR^2}{{\zeta^2+m^2}} -2\|e\|_{L^2}\|P^-\varphi\|_{L^2}-\|P^-\varphi\|_{L^2}^2\\
    \geq&\frac{mR^2}{{\zeta^2+m^2}}-2\parenthesis{\frac{R^2}{\sqrt{\zeta^2+m^2}} }^{\frac{1}{2}}\parenthesis{ \parenthesis{1-\frac{\mu}{\sqrt{\zeta^2+m^2}}}\frac{R^2}{\sqrt{\zeta^2+m^2}}  }^{\frac{1}{2}}  \\
        & -\parenthesis{1-\frac{\mu}{\sqrt{\zeta^2+m^2}}}\frac{R^2}{\sqrt{\zeta^2+m^2}} \\
    =&\frac{R^2}{{\zeta^2+m^2}} \parenthesis{m+\mu-\sqrt{\zeta^2+m^2} -2(\zeta^2+m^2)^{\frac{1}{4}}\sqrt{\sqrt{ \zeta^2+m^2 }  -\mu} }.
\end{align}
To get a positive lower bound of the last line above, we let~$\mu=(1-\delta)\sqrt{\zeta^2+m^2}$ with~$\delta\in(0,1)$ and compute
\begin{align}
   &m+\mu-\sqrt{\zeta^2+m^2} -2(\zeta^2+m^2)^{\frac{1}{4}}\sqrt{\sqrt{ \zeta^2+m^2 }  -\mu}  \\
   =& m+(1-\delta)\sqrt{\zeta^2+m^2}-\sqrt{\zeta^2+m^2} -2(\zeta^2+m^2)^{\frac{1}{4}}\sqrt{\delta\sqrt{\zeta^2+m^2}}\\
   =& m -\delta\sqrt{ \zeta^2+m^2 }-2\sqrt{\delta}\sqrt{ \zeta^2+m^2 }\\
   \geq& m-3\sqrt{\delta}\sqrt{ \zeta^2+m^2 }\\
   \geq& \frac{m}{2} \qquad\qquad \mbox{ provided }\quad \delta\leq\frac{m^2}{36(\zeta^2+m^2)}.
\end{align}
For such~$\mu$, 
\begin{align}
    \int_{\T^2} |\bar{\varphi}\varphi|^\nu \dv_{\T^2}
    \geq  \frac{1}{\vol(\T^2)^{\nu -1}} \parenthesis{\frac{m}{2}\cdot\frac{R^2}{\zeta^2+m^2}}^{\nu}.
\end{align}
In this case we have
\begin{align}
    \int_{\T^2} & \frac{1}{2}\Abracket{\varphi,\pD_{\T^2} \varphi}- F_\eps(\varphi) \dv_{\T^2} \\
    =& \int_{\T^2} \frac{1}{2}\Abracket{P^+\varphi,\pD_{\T^2} P^+\varphi} \dv_{\T^2} + \int_{\T^2}\frac{1}{2}\Abracket{P^-\varphi,\pD_{\T^2}P^-\varphi} \dv_{\T^2} - \int_{\T^2} F_\eps(\varphi)\dv_{\T^2} \\
    =&\frac{1}{2}\parenthesis{ \tnorm{P^+\varphi }_{H^{\frac{1}{2}}}^2 - \tnorm{P^-\varphi}_{H^{\frac{1}{2}}}^2  }
     -\int_{\T^2} F_\eps(\varphi)\dv_{\T^2}\\
    \leq & \frac{1}{2}R^2
    - \frac{A_3m^\nu R^{2\nu}}{\vol(\T^2)^{\nu-1} 2^\nu({\zeta^2+m^2})^\nu}
     +  A_4 \vol(\T^2).
\end{align}
Since~$\nu>1$ and~$A_3>0$, we can choose~$R=R(\vol(\T^2), \zeta,m, A_3, A_4, \nu)\gg 1$ such that the right-hand side above is nonpositive.
For example, it suffices to have 
\begin{align}
    R=\max\braces{\sqrt{2A_4 \vol(\T^2)} , \; \parenthesis{\frac{2^\nu}{A_3}}^{\frac{1}{2(\nu-1)}} \sqrt{\vol(\T^2)} \parenthesis{\frac{\zeta^2+m^2}{m}}^{\frac{\nu}{2(\nu-1)}} }. 
\end{align}
\end{itemize}
The desired conclusion follows.
\end{proof}
Note that the choices of~$\mathscr{E}_k$ and~$R$ are independent of~$\eps\in [0,1]$ and also independent of the frequency~$a\in \R$, a property crucial for later analysis.

\begin{rmk}
    It is possible to get a sharper estimate for the admissible values of~$\mu$.
    Indeed, consider the function~$\eta\colon [0,\sqrt{\zeta^2+m^2}]\to\R$ given by
    \begin{align}
    \eta(\mu)\coloneqq m+\mu-\sqrt{\zeta^2+m^2} -2(\zeta^2+m^2)^{\frac{1}{4}}\sqrt{\sqrt{ \zeta^2+m^2 }  -\mu}.
    \end{align}
    Then~$\eta$ is continuous, increasing and~$\eta(\sqrt{\zeta^2+m^2})=m>0$,~$\eta(\sqrt{\zeta^2+m^2}-m)<0$.
    Thus there exists~$\mu_0> \sqrt{\zeta^2+m^2}-m $ such that~$\eta|_{[\mu_0,\sqrt{\zeta^2+m^2}]}>\frac{m}{2}$.
    Furthermore, the unique root of~$\eta$ is actually given by
    \begin{align}
        \mu_*=\sqrt{\zeta^2+m^2}-\parenthesis{\frac{m}{\sqrt{\sqrt{\zeta^2+m^2}+m}+ (\zeta^2+m^2)^{1/4} } }^2.
    \end{align}
    We take the clean and explicit value of~$\theta$ above because it provides a convenient uniform interval for the linking argument.
    For our purpose in the variational analysis, it suffices to have a nonempty range of~$\mu$ where the conclusion of Lemma~\ref{lemma:local control} holds.

\end{rmk}

For~$k=1$ we take $\mathscr{E}_1=\Span(e)$ with~$\tnorm{e}_{H^{\frac{1}{2}}}=1$ and let
\begin{align}
    \mathscr{C}_1(R)
    \coloneqq \braces{\varphi= P^-\varphi + s e\in H^{1/2}(\T^2,\C^4) \mid  \tnorm{P^-\varphi}_{H^{1/2}}\leq R, \; s \in [0, R]}.
\end{align}
To get a linking structure we consider the candidate
\begin{align}
    \mathscr{S}^+(r)\coloneqq \braces{\varphi\in H^{\frac{1}{2},+} \; \mid \; \tnorm{\varphi}_{H^{1/2}}=r }.
\end{align}
for some~$r\in(0,R)$ to be determined later.
Similar to the situation of~\cite{WZ2026Stationary}, we have
\begin{lemma}\label{lemma:negativity on boundary-1}
    Suppose~$a \in (\theta\sqrt{\zeta^2+m^2},\sqrt{\zeta^2+m^2})$. 
    Then~$J_{\zeta,\eps}|_{\p\mathscr{C}_1(R)}\leq 0$ for any~$\eps\in [0,1]$.
\end{lemma}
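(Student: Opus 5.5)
The plan is to split the boundary of $\mathscr{C}_1(R)$ into its three pieces and check the sign of $J_{\zeta,\eps}$ on each. Every $\varphi\in\mathscr{C}_1(R)$ has the form $\varphi=P^-\varphi+se$ with $\tnorm{P^-\varphi}_{H^{1/2}}\leq R$ and $s\in[0,R]$. So $\p\mathscr{C}_1(R)$ is the union of:
\begin{itemize}
\item[(a)] the bottom face $s=0$;
\item[(b)] the top face $s=R$;
\item[(c)] the lateral face $\tnorm{P^-\varphi}_{H^{1/2}}=R$ with $s\in[0,R]$.
\end{itemize}
Throughout, I write $J_{\zeta,\eps}(\varphi)=\int_{\T^2}\frac12\Abracket{\pD_{\T^2}\varphi,\varphi}-\frac a2|\varphi|^2-F_\eps(\varphi)\dv_{\T^2}$ and use $F_\eps\geq0$ and $a>0$.

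\textbf{Case (a).} Here $\varphi=P^-\varphi$, so
\begin{align}
J_{\zeta,\eps}(\varphi)=-\tfrac12\tnorm{P^-\varphi}_{H^{1/2}}^2-\tfrac a2\|\varphi\|_{L^2}^2-\int_{\T^2}F_\eps(\varphi)\dv_{\T^2}\leq0.
\end{align}

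\textbf{Case (c).} Since $e$ is an eigenspinor of $\pD_{\T^2}$ with eigenvalue $\sqrt{\zeta^2+m^2}$, the quadratic form splits orthogonally. With $\tnorm{e}_{H^{1/2}}=1$ this gives
\begin{align}
\int_{\T^2}\tfrac12\Abracket{\pD_{\T^2}\varphi,\varphi}\dv_{\T^2}=\tfrac12 s^2-\tfrac12\tnorm{P^-\varphi}_{H^{1/2}}^2=\tfrac12(s^2-R^2)\leq0.
\end{align}
The remaining terms $-\frac a2\|\varphi\|_{L^2}^2$ and $-\int F_\eps$ are nonpositive, so $J_{\zeta,\eps}(\varphi)\leq0$.

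\textbf{Case (b).} This is the only case that needs work. Here $P^+\varphi=Re\in\mathscr{E}_1$, so $\tnorm{P^+\varphi}_{H^{1/2}}=R$, and $\tnorm{P^-\varphi}_{H^{1/2}}\leq R$. These are exactly the hypotheses of Lemma~\ref{lemma:local control} with $k=1$. I apply that lemma with $\mu=a$, which is admissible because $a\in(\theta\sqrt{\zeta^2+m^2},\sqrt{\zeta^2+m^2})$ is precisely its range of $\mu$. It yields
\begin{align}
\int_{\T^2}\tfrac12\Abracket{\pD_{\T^2}\varphi,\varphi}-F_\eps(\varphi)\dv_{\T^2}\leq\int_{\T^2}\tfrac a2|\varphi|^2\dv_{\T^2},
\end{align}
which is equivalent to $J_{\zeta,\eps}(\varphi)\leq0$.

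The one point to check is that $R$ and $\mathscr{E}_1$ are not tied to a particular $\mu$. The lemma's statement fixes $\mu$ first, but its proof chooses $\mathscr{E}_1$ as any real line in $\mathscr{P}$ and $R$ depending only on $\vol(\T^2),\zeta,m,A_3,A_4,\nu$. The constraint on $\mu$ enters only through $\delta\leq\frac{m^2}{36(\zeta^2+m^2)}$. Hence one $R$, fixed once for all, serves every $a$ in the range and every $\eps\in[0,1]$, and I would state this explicitly so that $\mathscr{C}_1(R)$ itself does not depend on $a$ or $\eps$.
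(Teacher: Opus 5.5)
Your proof is correct and is essentially the paper's argument. It uses the same split of $\p\mathscr{C}_1(R)$ into the faces $s=0$, $s=R$ and $\tnorm{P^-\varphi}_{H^{1/2}}=R$, with $F_\eps\geq 0$ handling the first and last, and Lemma~\ref{lemma:local control} with $\mu=a$ handling the top face (the paper allows any $\mu\in(\theta\sqrt{\zeta^2+m^2},a]$). On the lateral face you discard $-\frac a2\|\varphi\|_{L^2}^2$ and get $\leq 0$, whereas the paper keeps the $e$-part of that term to get strict negativity, a cosmetic difference; your observation that $R$ and $\mathscr{E}_1$ do not depend on $\mu$, $a$ or $\eps$ matches the paper's remark following that lemma.
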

\begin{lemma}\label{lemma:positivity in interior-1}
    There exists~$r\in (0,R)$ and~$C_*>0$ such that for any~$\eps\in [0,1]$, we have
    \begin{align}
         J_{\zeta,\eps}|_{\mathscr{S}^+(r)}\geq C_*>0.
    \end{align}
\end{lemma}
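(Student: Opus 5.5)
The plan is the standard positivity estimate near the origin on the positive half-space. For $\varphi\in\mathscr{S}^+(r)$ we have $P^-\varphi=0$. Hence
\begin{align}
    J_{\zeta,\eps}(\varphi)=\frac{1}{2}\tnorm{\varphi}_{H^{1/2}}^2-\frac{a}{2}\|\varphi\|_{L^2}^2-\int_{\T^2}F(\varphi)\dv_{\T^2}-\eps\int_{\T^2}|\varphi|^{\alpha_2}\dv_{\T^2}.
\end{align}

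\textbf{Step 1: the quadratic part.} Since every positive eigenvalue of $\pD_{\T^2}$ is at least $\sqrt{\zeta^2+m^2}$ (Lemma~\ref{lemma:spectrum}), we have $\|\varphi\|_{L^2}^2\le \tnorm{\varphi}_{H^{1/2}}^2/\sqrt{\zeta^2+m^2}$ on $H^{\frac12,+}$. For $a\le\sqrt{\zeta^2+m^2}$ this alone only gives $\frac12(1-\frac{a}{\sqrt{\zeta^2+m^2}})r^2\ge0$, which degenerates as $a\to\sqrt{\zeta^2+m^2}$.

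The constant $C_*$ is to be independent of $\eps$ only, with $a$ fixed in the range where the lemma is applied, so I would accept the dependence on $a$. This gives the bound
\begin{align}
    \frac{1}{2}\parenthesis{1-\frac{a}{\sqrt{\zeta^2+m^2}}}r^2
\end{align}
for the quadratic part, valid for any $a<\sqrt{\zeta^2+m^2}$ (and trivially for $a\le 0$). This positive coefficient is exactly what makes $C^*(a)\to0$ possible, consistent with the introduction.

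\textbf{Step 2: the nonlinear part.} By (F1) and the uniform version for $F_\eps$ (valid after enlarging $A_1$ for $\eps\in[0,1]$),
\begin{align}
    \int_{\T^2}F_\eps(\varphi)\dv_{\T^2}\le A_1\parenthesis{\|\varphi\|_{L^{\alpha_1}}^{\alpha_1}+\|\varphi\|_{L^{\alpha_2}}^{\alpha_2}}.
\end{align}
Since $2<\alpha_1\le\alpha_2<4$, the embedding $H^{1/2}(\T^2)\hookrightarrow L^{\alpha_j}$ bounds this by $C(r^{\alpha_1}+r^{\alpha_2})$, with $C$ independent of $\eps$. The norm equivalence between $\tnorm{\cdot}$ and the standard $H^{1/2}$ norm holds because $m>0$.

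\textbf{Step 3: choosing $r$.} Combining the two steps,
\begin{align}
    J_{\zeta,\eps}(\varphi)\ge \frac{1}{2}\parenthesis{1-\frac{a}{\sqrt{\zeta^2+m^2}}}r^2-C\parenthesis{r^{\alpha_1}+r^{\alpha_2}}.
\end{align}
Because $\alpha_1>2$, choose $r$ small (and also $r<R$, with $R$ from Lemma~\ref{lemma:local control}, which is independent of $\eps$ and $a$) so that $C(r^{\alpha_1}+r^{\alpha_2})\le\frac14(1-\frac{a}{\sqrt{\zeta^2+m^2}})r^2$. Then $C_*=\frac14(1-\frac{a}{\sqrt{\zeta^2+m^2}})r^2>0$ works uniformly in $\eps\in[0,1]$.

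\textbf{Main obstacle.} The only real subtlety is this $a$-dependence: $C_*$ and $r$ deteriorate as $a\uparrow\sqrt{\zeta^2+m^2}$, so the lemma must be read with $a$ fixed. Uniformity is claimed only in $\eps$, which the argument delivers because every constant involved (embedding constants, $A_1$, the spectral gap) is $\eps$-free.
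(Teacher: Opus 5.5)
Your proposal is correct and is essentially the paper's proof: you use the spectral-gap bound $\|\varphi\|_{L^2}^2\leq \tnorm{\varphi}_{H^{1/2}}^2/\sqrt{\zeta^2+m^2}$ on $H^{\frac12,+}$, then (F1) with the enlarged $A_1$ together with the embedding $H^{\frac12}\hookrightarrow L^{\alpha_j}$, and then a choice of $r<R$ small depending on $a$ but not on $\eps$, exactly as the paper remarks. One minor point: your parenthetical about $a\le 0$ is slightly off as written, since there the quadratic part is only bounded below by $\frac12 r^2$ rather than $\frac12(1-\frac{a}{\sqrt{\zeta^2+m^2}})r^2$, but that case plays no role in the lemma.
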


\begin{proof}[Proof of Lemma~\ref{lemma:negativity on boundary-1}]
    Let~$\varphi\in\p\mathscr{C}_1(R)$, namely, either~$\tnorm{P^-\varphi}_{H^{1/2}}=R$ or~$s\in \braces{0,R}$.
    \begin{itemize}
        \item If~$\tnorm{P^-\varphi}_{H^{1/2}}=R$, then for any~$s\in[0,R]$, using~$F_\eps(\varphi)\geq 0$,
            \begin{align}
                J_{\zeta,\eps}(P^-\varphi+s e)&\leq \frac{1}{2}(1-\frac{a}{\sqrt{\zeta^2+m^2}})s^2 -  \frac{1}{2}\tnorm{P^-\varphi}^2_{H^{1/2}} \\
                &\leq \frac{1}{2}(1-\frac{a}{\sqrt{\zeta^2+m^2}})R^2 -\frac{1}{2}R^2< 0.
            \end{align}
        \item If~$s=0$, then~$J_{\zeta,\eps}(P^-\varphi)\leq 0$ since~$F_\eps(\varphi)\geq 0$; and if~$s=R$, then~$\tnorm{P^-\varphi}_{H^{1/2}}\leq R$ and~$\tnorm{P^+\varphi}_{H^{1/2}}= R\tnorm{e}_{H^{1/2}}=R$, applying Lemma~\ref{lemma:local control} with~$\mu \in (\theta\sqrt{\zeta^2+m^2}, a]$, we again have~$J_{\zeta,\eps}(\varphi)\leq 0$.
    \end{itemize}
\end{proof}

\begin{proof}[Proof of Lemma~\ref{lemma:positivity in interior-1}]
Note that for~$\varphi=P^+\varphi\in H^{\frac{1}{2},+}$,
\begin{align}
    \|\varphi\|^2_{L^2}\leq \frac{1}{\sqrt{m^2+\zeta^2}}\tnorm{\varphi}^2_{H^{\frac{1}{2}}}.
\end{align}
By (F1) we have 
\begin{align}
    J_{\zeta,\eps}(\varphi)
    \geq &  \frac{1}{2}\int_{\mathbb{T}^2} \Abracket{\varphi, \pD_{\T^2}\varphi} - a|\varphi|^2 \dv_{\T^2}
        - A_1\int_{\mathbb{T}^2} |\varphi|^{\alpha_1} + |\varphi|^{\alpha_2}\dv_{\T^2}\\
    \geq& \frac{1}{2}(1-\frac{a}{\sqrt{m^2+\zeta^2}})\tnorm{\varphi}^2_{H^{\frac{1}{2}}}
    -C(\tnorm{\varphi}^{\alpha_1}_{H^{\frac{1}{2}}}+\tnorm{\varphi}^{\alpha_2}_{H^{\frac{1}{2}}}).
\end{align}
Hence for sufficiently small~$r=\tnorm{\varphi}_{H^{1/2}}>0$, depending only on the fixed data~$m,\zeta,A_1, \vol(\T^2),\alpha_1$ and on~$a$,
\begin{align}
    \inf_{\mathscr{S}^+(r)} J_{\zeta,\eps}(\varphi^+)>0.
\end{align}
\end{proof}

We note that the~$r$ and~$C_*$ can be chosen to be independent of~$\eps\in [0,1]$.
Furthermore,
\begin{align}\label{eq:intersection}
    \mathscr{C}_1(R)\cap \mathscr{S}^+(r)= \braces{ r e} \neq\emptyset.
\end{align}
It follows that
\begin{align}\label{eq:level estimate}
    \sup_{\p \mathscr{C}_1(R)} J_{\zeta,\eps} = 0 
    < C_* 
    \leq  \inf_{\mathscr{S}^+(r)} J_{\zeta,\eps} 
    \leq \sup_{\mathscr{C}_1(R)} J_{\zeta,\eps} <+\infty
\end{align}
for each~$\eps\in [0,1]$, where the last upper bound holds due to the Sobolev embedding together with the fact~$\alpha_2<4$.
That is,~$\p\mathscr{C}_1(R)$ and~$\mathscr{S}^+(r)$ provide the local linking geometry for~$J_{\zeta,\eps}$. 

\

Next we consider deformations of the set~$\mathscr{C}_1(R)$ and try to define the minimax levels in a suitable way. 
Since~$F$ is only assumed to be~$C^{1,\alpha}_{loc}$, the gradient of~$J_{\zeta,\eps}$ is not guaranteed to be locally Lipschitz, hence we cannot directly use the negative gradient flow of~$J_{\zeta,\eps}$. 
The same issue arises in~\cite{WZ2026Stationary} and is resolved there using \emph{admissible pseudo-gradient fields and flows}. 
Those can also be used here on the two-dimensional torus. 
We briefly recall (and adapt suitably) the main constructions before we define the minimax levels.

A vector field~$W$ on~$H^{\frac{1}{2}}(\mathbb{T}^2,\C^4)$ is called an \emph{admissible pseudo-gradient field} if it has the form 
\begin{align}
    W(\varphi)= \eta(\varphi) \parenthesis{ (P^+ -P^- )\varphi - \widetilde{K}(\varphi)},
\end{align}
where
\begin{itemize}
    \item[(W1)] $\eta\colon H^{\frac{1}{2}}\to [0,1]$ is a locally Lipschitz function,
    \item[(W2)] $\widetilde{K}\colon H^{\frac{1}{2}}\to H^{\frac{1}{2}}$ is locally Lipschitz and is compact on bounded sets;
    \item[(W3)] $W|_{\p\mathscr{C}_1(R)} = 0$;
    \item[(W4)] On the region~$\braces{\varphi\colon \eta(\varphi)>0}$, there holds~$\dd J_{\zeta,\eps}(\varphi)[P^+\varphi -P^- \varphi -\widetilde{K}(\varphi)]\geq 0$;
    \item[(W5)] $\tnorm{W(\varphi)}_{H^{1/2}} \leq 1$ for any~$\varphi\in H^{\frac{1}{2}}$. 
\end{itemize}
To such a vector field we associate an admissible pseudo-gradient flow~$\phi^W_t\colon H^{\frac{1}{2}}\to H^{\frac{1}{2}}$ generated by~$-W$: 
\begin{align}
    \begin{cases}
        \frac{\p\phi^W_t(\varphi)}{\p t} = - W(\phi^W_t(\varphi)), & \quad \forall \varphi \in H^{\frac{1}{2}}, \; \forall t\geq 0, \\
        \phi^W_0(\varphi)=\varphi, & \quad \forall \varphi\in H^{\frac{1}{2}}.
    \end{cases}
\end{align}
The flow fixes the boundary~$\p\mathscr{C}_1(R)$ pointwise and does not increase the functional~$J_{\zeta,\eps}$.  
For each~$t\geq 0$, we call~$\phi^W_t$ a time-$t$ map of the admissible pseudo-gradient flow. 
There are plenty of such admissible pseudo-gradient vector fields. 
Moreover, using variation of constants, we see that along the flow the negative part has the form
\begin{align}
    P^-\phi^W_t(\varphi)
    = e^{u^W(t,\varphi)}P^-\varphi + C^W(t,\varphi)
\end{align}
with 
\begin{align}
    u^W(t,\varphi)=\int_0^t \eta(\phi^W_s(\varphi))\dd s, \qquad  0\leq u^W(t,\varphi)\leq t ,
\end{align}
\begin{align}
    C^W(t,\varphi)
    = \int_0^t e^{\int_s^t \eta(\phi^W_\tau(\varphi))\dd \tau } \eta(\phi^W_s(\varphi))P^-\widetilde{K}(\phi^W_s(\varphi))\dd{s}. 
\end{align}
In particular, for any bounded set~$B\subset H^{\frac{1}{2}}$ and any bounded time interval~$[0,T]$, the map 
\begin{align}
    C^W\colon [0,T]\times B \to H^{\frac{1}{2},-}
\end{align}
is continuous and compact.

The deformation class is the collection of finite compositions of time maps of admissible pseudo-gradient flows:
\begin{align}
    \Gamma\coloneqq \braces{ \phi^{W_k}_{t_k}\circ \cdots\circ \phi^{W_1}_{t_1} \mid W_j \mbox{ is admissible }, t_j\geq 0, \; k\in\mathbb{N} }. 
\end{align}
It is nonempty and closed under composition. 
Each element~$\tilde{\phi}$ is homotopic to the identity map by concatenating finitely many flow segments. 
We denote such a homotopy by~$\Phi(t)$:~$\Phi(0)=\id$,~$\Phi(1)=\tilde{\phi}$, and observe that for each~$t\in [0,1]$,
\begin{itemize}
    \item[(1)] $\Phi(t)$ fixes~$\p\mathscr{C}_1(R)$ pointwise and hence~$\Phi(t,\p\mathscr{C}_1(R))\cap \mathscr{S}^+(r) =\emptyset$;
    \item[(2)] $P^-\Phi(t,\varphi)=e^{u(t,\varphi)}P^-\varphi+ C_\Phi(t,\varphi)$ where~$u$ is continuous and bounded, and~$C_\Phi$ is continuous and compact;
    \item[(3)] $\Phi(t)(\mathscr{C}_1(R))$ intersects~$\mathscr{S}^+(r)$:~$\Phi(t,\mathscr{C}_1(R))\cap \mathscr{S}^+(r)\neq \emptyset$, see \cite[Lemma 4.6]{WZ2026Stationary}.
\end{itemize}

We can then define the following minimax level 
\begin{align}
    \Lambda_1(\zeta,\eps) \coloneqq \inf_{\tilde{\phi}\in \Gamma} \sup_{\tilde{\phi}(\mathscr{C}_1(R))} J_{\zeta,\eps}. 
\end{align}
By the standard minimax deformation argument associated with the admissible class~$\Gamma$ (see~\cite{WZ2026Stationary} for details), there exists a Palais--Smale sequence at the level~$\Lambda_1(\zeta,\eps)>0$. 
Indeed, otherwise an admissible pseudogradient deformation would lower the maximal value below~$\Lambda_1(\zeta,\eps)$ contradicting the definition of the minimax level. 
Since the perturbed functional~$J_{\zeta,\eps}$ satisfies the Palais--Smale condition by Proposition~\ref{prop:PS}, this sequence has a convergent subsequence whose limit is a critical point of~$J_{\zeta,\eps}$ at level~$\Lambda_1(\zeta,\eps)$, confirming that~$\Lambda_1(\zeta,\eps)$ is a critical level for~$J_{\zeta,\eps}$. 

We have seen that the critical levels satisfy
\begin{align}
    0<C_*\leq \Lambda_1(\zeta,\eps) \leq C^*<+\infty
\end{align}
with the lower and upper bounds independent of~$\eps\in [0,1]$. 
An explicit upper bound can be given by 
\begin{align}
    \sup_{\mathscr{C}_1(R)} J_{\zeta,\eps} 
    \leq \sup_{ \varphi\in \mathscr{C}_1(R)} \frac{1}{2}\tnorm{P^+\varphi}_{H^{\frac{1}{2}}}^2 - \frac{a}{2}\|P^+\varphi\|_{L^2}^2 = \frac{1}{2}\parenthesis{1-\frac{a}{\sqrt{\zeta^2+m^2}}}R^2. 
\end{align}
Thus we can take, for example,
\begin{align}
    C^*(a)= \frac{1}{2}\parenthesis{1-\frac{a}{\sqrt{\zeta^2+m^2}}}R^2. 
\end{align}
Recall that~$R$ is independent of~$a$ and~$\eps$.
Thus we have 
\begin{align}\label{eq:upper-level-estimate}
    \lim_{a\to \sqrt{\zeta^2+m^2}-0} C^*(a)=0.
\end{align}

Therefore, we obtain
\begin{thm}\label{thm:existence for perturbed NDE}
    For each~$\eps\in (0,1]$ and~$a\in (\theta\sqrt{\zeta^2+m^2},\sqrt{\zeta^2+m^2})$, there exists a nontrivial solution~$\varphi_{\eps,a}$ to~\eqref{eq:perturbed-NDE} such that
    \begin{align}
        0<C_* \leq J_{\zeta,\eps}(\varphi_{\eps,a})=\Lambda_1(\zeta,\eps)\leq C^*(a)<+\infty,
    \end{align}
\end{thm}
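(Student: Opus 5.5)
The proof will assemble the pieces already established in Section~\ref{sect:perturbed solutions}; it is essentially a bookkeeping argument. I fix $\eps\in(0,1]$ and $a\in(\theta\sqrt{\zeta^2+m^2},\sqrt{\zeta^2+m^2})$.

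First, Lemma~\ref{lemma:local control} applies with some $\mu\in(\theta\sqrt{\zeta^2+m^2},a]$. Through Lemma~\ref{lemma:negativity on boundary-1} this gives $\sup_{\p\mathscr{C}_1(R)}J_{\zeta,\eps}\le 0$. Lemma~\ref{lemma:positivity in interior-1} gives $J_{\zeta,\eps}\ge C_*>0$ on $\mathscr{S}^+(r)$, and both $R$ and $r$ can be chosen independent of $\eps$.

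Next, I define $\Lambda_1(\zeta,\eps)$ as the inf-sup over the admissible deformation class $\Gamma$.

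The lower bound $\Lambda_1\ge C_*$ follows from the intersection property (3): every $\tilde\phi(\mathscr{C}_1(R))$ meets $\mathscr{S}^+(r)$, so $\sup_{\tilde\phi(\mathscr{C}_1(R))}J_{\zeta,\eps}\ge C_*$.

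For the upper bound I take $\tilde\phi=\id$. For $\varphi=P^-\varphi+se\in\mathscr{C}_1(R)$, I discard the nonpositive terms $-\tfrac12\tnorm{P^-\varphi}^2$ and $-F_\eps$. Since $e$ is constant, $P^-\varphi$ is $L^2$-orthogonal to $e$, so $\|\varphi\|_{L^2}^2\ge s^2\|e\|_{L^2}^2$. This yields $J_{\zeta,\eps}\le\tfrac12\bigl(1-\tfrac{a}{\sqrt{\zeta^2+m^2}}\bigr)s^2\le C^*(a)$.

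The main step is showing that $\Lambda_1(\zeta,\eps)$ is a critical value. I argue by contradiction.

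Suppose there is no Palais--Smale sequence at level $c=\Lambda_1$. Then there are $\bar\eps,\delta>0$ with $\tnorm{\dd J_{\zeta,\eps}}\ge\delta$ on the strip $\{|J_{\zeta,\eps}-c|\le 2\bar\eps\}$. I shrink $\bar\eps$ so that $2\bar\eps<C_*$; then the strip stays away from $\p\mathscr{C}_1(R)$, where $J\le 0$.

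On this strip I build an admissible pseudo-gradient field. The natural choice of the non-identity part is $\widetilde K(\varphi)=|\pD_{\T^2}|^{-1}\bigl(a\varphi+\p F_\eps(\varphi)\bigr)$, so that $\nabla J_{\zeta,\eps}=(P^+-P^-)\varphi-\widetilde K(\varphi)$. This $\widetilde K$ is compact on bounded sets because $\alpha_2<4$ and $H^{1/2}\hookrightarrow L^q$ is compact for $q<4$. However, $\p F$ is only $C^{0,\alpha}$, so $\widetilde K$ is not locally Lipschitz. I therefore replace it by a locally Lipschitz, finite-dimensional-valued approximation using a partition of unity, as in~\cite{WZ2026Stationary}. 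The cutoff $\eta$ is chosen supported in the strip, and the field is normalized so that (W5) holds.

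Flowing a near-optimal set $\tilde\phi(\mathscr{C}_1(R))$, with sup below $c+\bar\eps$, for a bounded time then lowers the sup below $c-\bar\eps$, contradicting the definition of $c$. The time bound comes from $J$ dropping at a rate of at least about $\delta^2$ while $\tnorm{W}\le1$. The flow is composable with $\tilde\phi$ and stays in $\Gamma$. This construction is the most delicate step, but it is identical to~\cite{WZ2026Stationary}, and I would cite it there.

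Finally, Proposition~\ref{prop:PS}(ii) extracts a convergent subsequence of the Palais--Smale sequence. Its limit $\varphi_{\eps,a}$ solves~\eqref{eq:perturbed-NDE} with $J_{\zeta,\eps}(\varphi_{\eps,a})=\Lambda_1\ge C_*>0$. Since $J_{\zeta,\eps}(0)=0$, the solution is nontrivial.
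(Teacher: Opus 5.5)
Your proposal is correct and follows essentially the same route as the paper: linking geometry from Lemmas~\ref{lemma:negativity on boundary-1} and~\ref{lemma:positivity in interior-1}, the intersection property for the lower bound $C_*$, the identity deformation for the upper bound $C^*(a)$, the admissible pseudo-gradient deformation of \cite{WZ2026Stationary} to obtain a Palais--Smale sequence at level $\Lambda_1(\zeta,\eps)$, and Proposition~\ref{prop:PS} to extract a nontrivial critical point. One small correction: the $L^2$-orthogonality of $P^-\varphi$ and $e$ holds because $e$ is an eigenspinor of the self-adjoint operator $\pD_{\T^2}$ with positive eigenvalue (so $e\in H^{\frac12,+}$), not because $e$ is constant; indeed $H^{\frac12,-}$ also contains constant spinors, namely those in $\Eigen(\pD_{\T^2};-\sqrt{\zeta^2+m^2})$.
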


We next prove estimates uniform in~$\eps\in (0,1]$ after possibly restricting~$a$ to a smaller left neighborhood of~$\sqrt{\zeta^2+m^2}$. 
Since we aim to get an estimate which is also uniform in~$a$, for~$a$ in a suitable subset of~$(\theta\sqrt{\zeta^2+m^2},\sqrt{\zeta^2+m^2})$, from now on we denote the perturbed functional by~$J_{\zeta,\eps,a}$ to emphasize the dependence on~$a$. 

\begin{thm}\label{thm:uniform estimate}
    There exists~$m_*\in (\theta\sqrt{\zeta^2+m^2},\sqrt{\zeta^2+m^2})$ and~$C>0$, depending on the structure constants~$A_j$,~$\alpha_1,\alpha_2,\nu,\beta$, the physical parameters~$m,\zeta$ and the torus~$\T^2$, such that for any~$a\in (m_*, \sqrt{\zeta^2+m^2})$ and any~$\eps\in (0,1]$, 
    \begin{align}\label{eq:uniform estimate}
         0<\|\varphi_{\eps,a}\|_{H^1(\T^2,\C^4)}\leq C. 
    \end{align}
\end{thm}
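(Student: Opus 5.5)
The strategy is a contradiction argument built on the two ingredients of Section~\ref{sect:key lemma}: null-cone separation (Lemma~\ref{lemma:null-cone-separation}) and the uniform resolvent estimate (Lemma~\ref{lemma:invertibility-orthogonal}). The lower bound $\|\varphi_{\eps,a}\|_{H^1}>0$ holds because $J_{\zeta,\eps,a}(\varphi_{\eps,a})\geq C_*>0$ and $J(0)=0$. The whole task is the upper bound.

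\emph{Step 1: identities from the level.} Write $\varphi=\varphi_{\eps,a}$. Since $\dd J_{\zeta,\eps,a}(\varphi)=0$, we have
\begin{align}
    \Lambda_1=J-\tfrac12 \dd J[\varphi]=\int_{\T^2}\tfrac12\p F_\eps(\varphi)[\varphi]-F_\eps(\varphi)\dv_{\T^2}\geq \tfrac{\alpha_1-2}{2}\int_{\T^2} F_\eps(\varphi)\dv_{\T^2}
\end{align}
by (F3). Combined with $\Lambda_1\leq C^*(a)$, this gives
\begin{align}
    \int F_\eps(\varphi)\leq \tfrac{2C^*(a)}{\alpha_1-2}\to 0 \quad\mbox{as } a\to\sqrt{\zeta^2+m^2}.
\end{align}
By (F4), and since $F_\eps\ge F$, this yields $\|\bar\varphi\varphi\|_{L^\nu}^\nu\leq C(C^*(a)+A_4)$. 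We also get $\eps\|\varphi\|_{L^{\alpha_2}}^{\alpha_2}\leq CC^*(a)$.

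\emph{Step 2: $L^4$ bound by contradiction.} Suppose there are $a_n\to\sqrt{\zeta^2+m^2}$, $\eps_n\in(0,1]$ with $t_n:=\|\varphi_n\|_{L^4}\to\infty$. Normalize $w_n=\varphi_n/t_n$. Step~1 gives $\|\bar w_nw_n\|_{L^\nu}\leq C t_n^{-2}\to0$, so Lemma~\ref{lemma:null-cone-separation} yields $\|\proj^\perp_{m,\zeta}w_n\|_{L^4}\geq\kappa_2$. On the other hand, apply $\proj^\perp_{m,\zeta}$ to the equation $(\pD_{\T^2}-a)\varphi=\p F_\eps(\varphi)$ and use Lemma~\ref{lemma:invertibility-orthogonal}:
\begin{align}
    \|\proj^\perp w_n\|_{L^4}\leq C t_n^{-1}\|\p F_{\eps_n}(\varphi_n)\|_{L^{4/3}}.
\end{align}
We bound $\|\p F(\varphi)\|_{L^{4/3}}$ via (F5), using H\"older:
\begin{align}
    \|\p F(\varphi)\|_{L^{4/3}}\leq A_5\big(\delta\|\varphi\|_{L^{4/3}}+C_\delta\|F(\varphi)^{1/\beta}|\varphi|\|_{L^{4/3}}\big).
\end{align}
Since $\beta>2$, the second term is at most $\|F\|_{L^1}^{1/\beta}\|\varphi\|_{L^{p}}$ with $\tfrac34=\tfrac1\beta+\tfrac1p$, so $p<4$ and $\|\varphi\|_{L^p}\leq C t_n$. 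Hence the contribution is $\leq C_\delta\, C^*(a_n)^{1/\beta} t_n$. The perturbation term is $\eps\alpha_2\||\varphi|^{\alpha_2-1}\|_{L^{4/3}}$. I would interpolate it between $\eps\|\varphi\|^{\alpha_2}_{L^{\alpha_2}}\leq CC^*$ and $\|\varphi\|_{L^4}$, with an $\eps$-power trade: since $\eps\leq1$ we have $\eps\le\eps^{\theta'}$, so the term is bounded by $(\eps\int|\varphi|^{\alpha_2})^{s}\|\varphi\|_{L^4}^{1-s'}$-type products giving $o(1)t_n$. The details here are the expected obstacle, since $\alpha_2<4$ is needed in the exponents. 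Altogether,
\begin{align}
    \kappa_2\leq C\big(\delta+C_\delta C^*(a_n)^{1/\beta}+o(1)\big).
\end{align}
Choosing $\delta$ small and then $n$ large gives a contradiction. Making this quantitative rather than sequential defines $m_*$: pick $\delta$, then $m_*$ so that $C_\delta C^*(a)^{1/\beta}<\kappa_2/(4C)$. For the contradiction we only need $t_n$ large enough that $\|\bar w w\|_{L^\nu}\le\kappa_1$, and that threshold is uniform.

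\emph{Step 3: bootstrap to $H^1$.} With $\|\varphi\|_{L^4}\leq C$ uniformly, (F2)/\eqref{eq:F2-all} gives $\p F_\eps(\varphi)\in L^{4/(\alpha_2-1)}$ with $4/(\alpha_2-1)>4/3$. Since $\pD_{\T^2}$ is invertible, elliptic $L^p$ estimates give a uniform $W^{1,q}$ bound for some $q>4/3$, hence an $L^{r}$ bound with $r>4$. Iterating, we reach $\varphi\in L^\infty$ uniformly, then $\pD\varphi\in L^2$, hence $\|\varphi\|_{H^1}\leq C$.

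The main obstacle is Step~2. There, the nonlinear right-hand side must be shown to be small relative to $t_n$ uniformly in $\eps$, using only the level bound $C^*(a)$ together with (F5), and the perturbation term must be handled uniformly as well.
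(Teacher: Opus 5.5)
Your proposal is correct and follows essentially the same route as the paper. The ingredients match: the level bound $\int F_\eps\le 2C^*(a)/(\alpha_1-2)$, normalization by the $L^4$ norm, null-cone separation played against the uniform resolvent estimate on $\proj_{m,\zeta}^\perp$ using (F5) with small $\delta$, and an elliptic bootstrap to $H^1$. Your direct $L^{4/3}$ H\"older bound with $\frac34=\frac1\beta+\frac1p$ is equivalent to the paper's use of $L^{\frac{4\beta}{4+\beta}}$, and you rightly skip the paper's unnecessary claim $\eps_*=0$.

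The step you flagged as the obstacle is immediate, and the paper handles it essentially this way. Since $\frac{4(\alpha_2-1)}{3}<\alpha_2$, H\"older gives $\eps\||\varphi|^{\alpha_2-1}\|_{L^{4/3}}\leq C\eps^{\frac{1}{\alpha_2}}\bigl(\eps\|\varphi\|_{L^{\alpha_2}}^{\alpha_2}\bigr)^{\frac{\alpha_2-1}{\alpha_2}}\leq C\,C^*(a)^{\frac{\alpha_2-1}{\alpha_2}}$, which is a uniform bound. After dividing by $t_n$ it is therefore $o(1)$, with no interpolation against $L^4$ needed.
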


\begin{proof}
    The proof follows the strategy of~\cite{WZ2026Stationary}, with suitable modifications reflecting the shifted lowest eigenspaces. 
    We will first bound the nonlinearity part in integral sense, then use a contradiction argument to show that the perturbed solutions have uniformly bounded~$L^4$ norms, and then use a bootstrap argument to get a uniform bound in~$H^1$. 

    \noindent\textbf{Step 1.} \emph{There exists a constant~$C=C(\alpha_1, C^*(a))>0$ such that
    \begin{align}
        0\leq \int_{\T^2} F_\eps(\varphi_{\eps,a})\, \dv_{\T^2}\leq C.
    \end{align}
    }

    We use~\eqref{eq:perturbed-NDE} and (F3) to get
    \begin{align}
        2J_{\zeta,\eps,a}(\varphi_{\eps,a}) + 2\int_{\T^2} F_\eps(\varphi_{\eps,a})\dv_{\T^2}
        =&\int_{\T^2} \Abracket{\varphi_{\eps,a},\pD_{\T^2}\varphi_{\eps,a}}-a|\varphi_{\eps,a}|^2\dv_{\T^2} \\
        =&\int_{\T^2} \partial F_\eps(\varphi_{\eps,a})[\varphi_{\eps,a}]\dv_{\T^2} \\
        \geq& \alpha_1\int_{\T^2} F_\eps(\varphi_{\eps,a}) \dv_{\T^2}.
    \end{align}
    As~$\alpha_1>2$ and~$J_{\zeta,\eps}(\varphi_{\eps,a})\in [C_*, C^*(a)]$, it follows that
    \begin{align}
        0\leq \int_{\T^2} F_\eps(\varphi_{\eps,a})\dv_{\T^2} \leq \frac{2C^*(a)}{\alpha_1-2}.
    \end{align}
    Since~$F_\eps$ is a sum of two nonnegative summands, we have 
    \begin{align}\label{eq:uniform estimate for perturbation}
    0\leq \int_{\T^2} F(\varphi_{\eps,a})\dv_{\T^2}\leq \frac{2C^*(a)}{\alpha_1-2}, & & 
    \mbox{ and } & & 
    0\leq \eps\int_{\T^2} |\varphi_{\eps,a}|^{\alpha_2}\dv_{\T^2}\leq \frac{2C^*(a)}{\alpha_1-2}.
    \end{align}
    Note that the above bound on the~$L^{\alpha_2}$ norms blows up as~$\eps\to0^+$.

    \

\noindent\textbf{Step 2.} \emph{We establish a uniform bound of the norms~$\|\varphi_\eps\|_{L^4}$.} 

Argue by contradiction, and suppose that there is no such uniform~$L^4$ bound on the perturbed solutions, no matter how close~$a$ is to the lowest positive eigenvalue~$\sqrt{\zeta^2+m^2}$. 
Then there would exist sequences~$(a_n)\subset (\theta\sqrt{\zeta^2+m^2},\sqrt{\zeta^2+m^2})$ and~$(\eps_n)\subset (0,1]$ such that
\begin{align}
    a_n\to\sqrt{\zeta^2+m^2}
    & & \mbox{ and } & &
    \lambda_n=\|\varphi_{\eps_n,a_n}\|_{L^4(\T^2)}\to+\infty. 
\end{align}
Passing to a subsequence if necessary, the sequence~$\eps_n$ also converges, say, to~$\eps_*\in [0,1]$. 
Note that 
\begin{align}\label{eq:NDE-an}
    (\pD_{\T^2} -a_n) \varphi_{\eps_n,a_n} 
    = \p F(\varphi_{\eps_n,a_n}) 
    + \alpha_2 \eps_n |\varphi_{\eps_n,a_n}|^{\alpha_2 -2} \varphi_{\eps_n,a_n}, 
\end{align}
\begin{align}
    0<J_{\zeta,\eps_n,a_n} (\varphi_{\eps_n,a_n})  \leq C^*(a_n) \to 0, \quad \mbox{ as } a_n \to \sqrt{\zeta^2+m^2}. 
\end{align}

\ 

\emph{Claim: $\eps_*=0$.}
\begin{proof}[Argument for Claim.] If~$\eps_*>0$, then we may assume that~$\eps_n>\frac{\eps_*}{2}>0$ for all~$n\geq 1$. 
    From~\eqref{eq:uniform estimate for perturbation} it follows that
    \begin{align}
        \|\varphi_{\eps_n,a_n}\|_{L^{\alpha_2}}^{\alpha_2} \leq \frac{4C^*(a_n)}{\eps_*(\alpha_1 -2)} \to 0.
    \end{align}
    By~\eqref{eq:NDE-an} and using~\eqref{eq:F2-all}, we get 
    \begin{align}
        \|\pD_{\T^2} \varphi_{\eps_n,a_n}\|_{L^{\frac{\alpha_2}{\alpha_2 -1}}(\T^2)}
        =& \| a_n \varphi_{\eps_n,a_n} +\p F(\varphi_{\eps_n,a_n}) 
    + \alpha_2 \eps_n |\varphi_{\eps_n,a_n}|^{\alpha_2 -2} \varphi_{\eps_n,a_n} \|_{L^{\frac{\alpha_2}{\alpha_2-1}}} \leq C<+\infty. 
    \end{align}
    Since~$\frac{\alpha_2}{\alpha_2 -1}>\frac{4}{3}$, and~$W^{1,\frac{4}{3}}(\T^2)\hookrightarrow L^4(\T^2)$, this gives a uniform bound of~$\|\varphi_{\eps_n,a_n}\|_{L^4}$, contradicting that~$\lambda_n\to+\infty$.
    Thus~$\eps_* =\lim\limits_{n\to+\infty} \eps_n =0$. 
\end{proof}

Normalize the solutions~$\varphi_{\eps_n,a_n}$ to
\begin{align}
    \vartheta_n\coloneqq \frac{\varphi_{\eps_n,a_n}}{\lambda_n}, & & \|\vartheta_n\|_{L^4}=1, & & n\geq 1,
\end{align}
which satisfy the equations
\begin{align}\label{eq:normalized eqn}
    \pD_{\T^2}\vartheta_n-a_n\vartheta_n=\frac{\p F(\varphi_{\eps_n,a_n})}{\lambda_n} + \alpha_2\eps_n |\varphi_{\eps_n,a_n}|^{\alpha_2 -2 }\vartheta_n, \qquad \mbox{ on } \; \mathbb{T}^2.
\end{align}
The perturbation terms converge to zero in~$L^{\frac{\alpha_2}{\alpha_2-1}}$:
\begin{align}
    \|\alpha_2\eps_n |\varphi_{\eps_n,a_n}|^{\alpha_2 -2 }\vartheta_n \|_{L^{\frac{\alpha_2}{\alpha_2 -1}}}
    = & \frac{\alpha_2 \eps_n^{\frac{1}{\alpha_2}} }{\lambda_n}
     \parenthesis{ \int_{\mathbb{T}^2} \left|\eps_n^{\frac{\alpha_2-1}{\alpha_2}} |\varphi_{\eps_n,a_n}|^{\alpha_2 -1} \right|^{\frac{\alpha_2}{\alpha_2-1}} \dv_{\T^2}   }^{\frac{\alpha_2-1}{\alpha_2}}\\
    \leq& \frac{\alpha_2 \eps_n^{\frac{1}{\alpha_2}} }{\lambda_n}
    \parenthesis{ \int_{\mathbb{T}^2} \eps_n |\varphi_{\eps_n,a_n}|^{\alpha_2}\dv_{\T^2}}^{\frac{\alpha_2-1}{\alpha_2}} \\
    \leq& \frac{\alpha_2 \eps_n^{\frac{1}{\alpha_2}} }{\lambda_n}\parenthesis{ \frac{2C^*(a_n)}{\alpha_1-2} }^{\frac{\alpha_2-1}{\alpha_2}} \to 0 ,\qquad \mbox{ as } n\to+\infty. 
\end{align}
For the nonlinear term involving~$\p F$, we need a more careful analysis.
The assumption (F5) implies the pointwise estimate (with~$\delta=1$)
\begin{align}
    \left|\frac{1}{\lambda_n} \p F(\varphi_{\eps_n,a_n})\right|
    \leq C \parenthesis{1+ F(\varphi_{\eps_n,a_n})^{\frac{1}{\beta}}} |\vartheta_n| \in L^{\frac{4\beta}{4+\beta}},
\end{align}
hence the norm estimate
\begin{align}
    \|\frac{1}{\lambda_n}\p F(\varphi_{\eps_n,a_n})\|_{L^{\frac{4\beta}{4+\beta}}}
    \leq C \parenthesis{ \vol(\mathbb{T}^2)+\frac{2 C^*(a_n)}{\alpha_1-2} }^{\frac{1}{\beta}}
\end{align}
due to \noindent\textbf{Step 1.}
Let 
\begin{align}
    p\coloneqq \min\braces{ \frac{4\beta}{4+\beta}, \frac{\alpha_2}{\alpha_2-1} } >\frac{4}{3}. 
\end{align}
Then~$ (\pD_{\T^2}-a_n)\vartheta_n \in L^p(\mathbb{T}^2)$ with
\begin{align}
    \|(\pD_{\T^2}-a_n)\vartheta_n\|_{L^p} \leq 2C \parenthesis{ \vol(\mathbb{T}^2)+\frac{2 C^*(a_n)}{\alpha_1-2} }^{\frac{1}{\beta}}.
\end{align}
It then follows that the sequence ~$(\vartheta_n)$ is uniformly bounded in~$W^{1,p}$ and,  upon passing to a subsequence, converges in~$L^4$ to some~$\vartheta_\infty$ with~$\|\vartheta_\infty\|_{L^4}=1$.
Moreover, by (F4) and~\eqref{eq:uniform estimate for perturbation}, 
\begin{align}
    \int_{\mathbb{T}^2} |\bar{\vartheta}_{n}\vartheta_{n}|^\nu\dv_{\T^2} \leq \frac{1}{A_3\lambda_n^{2\nu}} \parenthesis{ \int_{\mathbb{T}^2} F_{\eps_n}(\varphi_{\eps_n,a_n})\dv_{\T^2} + A_4\vol(\mathbb{T}^2)}
    \leq \frac{C+A_4 \vol(\mathbb{T}^2)}{A_3\lambda_n^{2\nu}}\to 0
\end{align}
as~$n\to+\infty$.
It follows that~$\bar{\vartheta}_\infty\vartheta_\infty\equiv 0$ almost everywhere. 
That is,~$\vartheta_\infty$ lies in the Lorentz null cone~$\mathcal{N}$,
so it cannot be in~$\Eigen(\pD_{\T^2};\sqrt{\zeta^2+m^2})$.

Recall that~$\proj_{m,\zeta}$ is the~$L^2$-orthogonal projection to~$\Eigen(\pD_{\T^2};\sqrt{m^2+\zeta^2})$ and~$\proj_{m,\zeta}^\perp = \id -\proj_{m,\zeta}$ is the complementary projection.

\ 

Now we have~$\|\bar{\vartheta}_n\vartheta_n\|_{L^\nu}\to 0$ and~$\|\vartheta_n\|_{L^4}=1$. 
By Lemma~\ref{lemma:null-cone-separation},
\begin{align}
    \|\proj_{m,\zeta}^\perp\vartheta_n\|_{L^4}\geq \kappa_2>0. 
\end{align}
On the other hand, using~\eqref{eq:normalized eqn}, (F5) and~\eqref{eq:uniform estimate for perturbation}, we have  
\begin{align}
    \| (\pD_{\T^2}-a_n)\vartheta_n\|_{L^{\frac{4}{3}}} 
    \leq& C(m,\zeta,\vol(\mathbb{T}^2),\beta,\alpha_2)\parenthesis{ \|\frac{1}{\lambda_n}\p F(\varphi_{\eps_n,a_n})\|_{L^{\frac{4\beta}{4+\beta}} } + \alpha_2\|\eps_n|\varphi_{\eps_n,a_n}|^{\alpha_2-2}\vartheta_n \|_{L^{\frac{\alpha_2}{\alpha_2-1}} } } \\
    \leq & C(m,\zeta,\vol(\mathbb{T}^2),\beta,\alpha_2)
    \parenthesis{ A_5\parenthesis{\delta+ C_\delta \parenthesis{ \int_{\mathbb{T}^2} F(\varphi_{\eps_n,a_n})\dv_{\T^2}}^{\frac{1}{\beta}} } + \frac{\eps_n^{\frac{1}{\alpha_2}}}{\lambda_n} } \\
    \leq &  C(m,\zeta,\vol(\mathbb{T}^2),\beta,\alpha_2, A_5)
    \parenthesis{ \delta + C_\delta (C^*(a_n))^{\frac{1}{\beta} } + \frac{\eps_n^{\frac{1}{\alpha_2}}}{\lambda_n} }.
\end{align}
Combining these two estimates using Lemma~\ref{lemma:invertibility-orthogonal} leads to 
\begin{align}
   0<\kappa_2\leq & \|\proj_{m,\zeta}^\perp \vartheta_n\|_{L^4}
    \leq C(m,\zeta) \|\proj_{m,\zeta}^\perp (\pD_{\T^2}-a_n)\vartheta_n\|_{L^{\frac{4}{3}}}  \\
    \leq & C(m,\zeta,\vol(\mathbb{T}^2),\beta,\alpha_2, A_5)
    \parenthesis{ \delta + C_\delta (C^*(a_n))^{\frac{1}{\beta} } + \frac{\eps_n^{\frac{1}{\alpha_2}}}{\lambda_n} }.
\end{align}
By first choosing~$\delta>0$ small, then sending~$n\to+\infty$ and recalling~$C^*(a_n)\to 0^+$ and~$\lambda_n\to+\infty$, we get a contradiction. 

This confirms the existence of~$m_*\in (\theta\sqrt{\zeta^2+m^2},\sqrt{\zeta^2+m^2})$ and~$C>0$ such that for any~$\eps\in (0,1]$ and any~$a\in (m_*,\sqrt{\zeta^2+m^2})$, we have 
\begin{align}
    \|\varphi_{\eps,a}\|_{L^4}\leq C. 
\end{align} 

\ 

\noindent\textbf{Step 3.} \emph{The perturbed solutions~$\varphi_{\eps,a}$ obtained as above are also uniformly bounded in~$H^1(\T^2;\C^4)$.}

With the above uniform~$L^4$ bound on~$\varphi_{\eps,a}$, we see that 
    \begin{align}
        |\pD_{\T^2}\varphi_{\eps,a}|
        =|a\varphi_{\eps,a} + \p F(\varphi_{\eps,a})+ \alpha_2\eps|\varphi_{\eps,a}|^{\alpha_2-2}\varphi_{\eps,a} |
        \leq
        C\parenthesis{ 1+ |\varphi_{\eps,a}|^{\alpha_2 -1}}
    \end{align}
    which is uniformly bounded in~$L^{\frac{4}{\alpha_2 -1 }}$ with~$\frac{4}{\alpha_2 -1}>\frac{4}{3}$.

    If~$\alpha_2\in (2,3]$, then~$\frac{4}{\alpha_2-1}\geq 2$, so the right-hand side of the Dirac equation is already uniformly bounded in~$L^2$. 
    Elliptic regularity immediately gives
    \begin{align}
        \|\varphi_{\eps,a}\|_{H^1}\leq C<+\infty, \quad \forall \eps\in (0,1].
    \end{align}
    If~$\alpha_2\in (3,4)$, then~$\frac{4}{\alpha_2-1}\in (\frac{4}{3},2)$, so the spinors~$\varphi_{\eps,a} $ are uniformly bounded in~$W^{1,\frac{4}{\alpha_2-1}} \hookrightarrow L^{\frac{4}{\alpha_2-3}}$ and~$\frac{4}{\alpha_2-3}>4$.
    We repeat this subcritical bootstrap procedure finitely many times until the exponent is sufficiently large so that the right-hand side belongs to~$L^2$. This ultimately yields a uniform~$H^1$-bound.

\end{proof}

\begin{proof}[Proof of Theorem~\ref{thm:existence-splitting}]
    Fix~$a\in (m_*,\sqrt{\zeta^2+m^2})$ and consider the minimax critical points~$\varphi_{\eps,a}$ obtained in Theorem~\ref{thm:existence for perturbed NDE}, which are uniformly bounded due to Theorem~\ref{thm:uniform estimate}. 
    Thus we can take a sequence~$\eps_n\to0^+$ for which~$\varphi_{\eps_n,a}$ converges to~$\varphi_{0,a}$ weakly in~$H^1$ and strongly in~$H^{\frac{1}{2}}$, and also strongly in~$L^q$ for every finite~$q\in [1,+\infty)$. 
    The perturbation terms go to zero in the limit both in the functional~\eqref{eq:perturbed-functional} and in the equation~\eqref{eq:perturbed-NDE}; for example,  
    \begin{align}
        \|\eps_n |\varphi_{\eps_n,a}|^{\alpha_2-2}\varphi_{\eps_n,a}\|_{L^2}
        \leq \eps_n \|\varphi_{\eps_n,a}\|_{L^{2(\alpha_2-1)}}^{\alpha_2 -1}
        \leq C\eps_n\|\varphi_{\eps_n,a}\|_{H^1}^{\alpha_2 -1} \leq C\eps_n\to 0, \quad \mbox{ as } \eps_n\to 0^+.        
    \end{align}
    Therefore~$\varphi_{0,a}$ is a solution of~\eqref{eq:NDE-T2}, with
    \begin{align}
        J_{\zeta}(\varphi_{0,a}) = \lim_{\eps_n\to 0^+} J_{\zeta,\eps_n}(\varphi_{\eps_n,a}) \geq C_*>0.
    \end{align}
    Hence also~$\varphi_{0,a}\neq 0$.
    The growth estimate~\eqref{eq:F2-all} first yields~$\varphi_{0,a}\in W^{1,p}$ for some~$p>2$, hence~$\varphi_{0,a}\in C^\sigma$ for some~$\sigma\in (0,1)$. 
    Since~$\p_\varphi F\in C^{\alpha}_{loc}$, Schauder theory applied to~\eqref{eq:NDE-T2} then gives~$\varphi_{0,a}\in C^1$. 
    This gives a nonzero~$C^1$ solution~$\psi_\zeta=\varphi_{0,a}\otimes e^{i\zeta x^3}$ to~\eqref{eq:NDE-spatial}. 

    This machinery works for each~$\zeta\in\bZ\frac{2\pi}{\ell_3}$, as long as the frequency~$a$ lies in a small left neighborhood of~$\sqrt{\zeta^2+m^2}$. This finishes the proof.   
\end{proof}

\ 

\begin{center}
    {\scriptsize DECLARATIONS}
\end{center} 

\

\textbf{Conflict of Interest.} The authors have no conflict of interest.

\textbf{Data Availability Statement.} Data sharing is not applicable because no datasets were generated or analyzed in this study.

\

\bibliographystyle{plainurl}
\bibliography{nonlinearDiracEqn}

\end{document}